\address{{\bf  Yoav Len}\newline Deaprtment of Mathematics,  University of T\"ubingen}
\email{yoavlen@gmail.com}
\newtheorem{theorem}{Theorem}
\numberwithin{theorem}{section}
\newtheorem{proposition}[theorem]{Proposition}
\newtheorem{lemma}[theorem]{Lemma}
\newtheorem*{proposition*}{Proposition}
\newtheorem*{theorem*}{Theorem}
\newtheorem*{lemma*}{Lemma}
\newtheorem*{claim*}{Claim}
\newtheorem*{conjecture*}{Conjecture}
\theoremstyle{definition}
\newtheorem{definition}[theorem]{Definition}
\theoremstyle{remark}
\newtheorem{example}[theorem]{Example}
\newtheorem*{example*}{Example}
\newcommand{\calC}{\mathfrak{C}}
\newcommand{\calD}{\mathcal{D}}
\newcommand{\calK}{\mathcal{K}}
\newcommand{\calE}{\mathcal{E}}
\newcommand{\calP}{\mathcal{P}}
\newcommand{\calQ}{\mathcal{Q}}
\newcommand{\calR}{\mathcal{R}}
\newcommand{\calf}{\mathfrak{f}}
\newcommand{\calg}{\mathfrak{g}}
\title{Hyperelliptic graphs and metrized complexes}
\author{Yoav Len}
\begin{document}
\maketitle
\begin{abstract}
We prove a version of Clifford's theorem for metrized complexes. Namely, a metrized complex that carries a divisor of degree $2r$ and rank $r$ (for $0<r<g-1$) also carries a divisor of degree $2$ and rank $1$. We provide a structure theorem for hyperelliptic metrized complexes, and use it to classify divisors of degree bounded by the genus. We discuss a tropical version of Martens' theorem for metric graphs.
\end{abstract}

\section{Introduction}
For a smooth algebraic curve, Clifford's theorem states that a divisor of rank $r$ has degree at least $2r$, and when $0<r<g-1$, equality may only be obtained for hyperelliptic curves \cite[Chapter III]{ACGH}. The first part of the theorem follows immediately from Riemann--Roch, whereas the second part requires more subtle geometric methods. 

With the development of tropical and non-archimedean geometry in recent years, it was observed that many theorems from classical algebraic geometry have combinatorial analogs in tropical geometry. Baker and Norine introduced  divisors on finite graphs, and showed that they satisfy a Riemann--Roch theorem \cite{BN}. Their results were later generalized by various authors to metric and weighted graphs \cite{AC,GK, MZ}.  Via Baker's Specialization Lemma \cite{Baker}, such results provide new combinatorial techniques for studying algebraic curves. 

Subsequently, Amini and Baker introduced metrized complexes, a common generalization of metric graphs and algebraic curves \cite{AB}. While these objects tend to be more involved than graphs, they also capture more algebraic information, and provide a much stronger tool in some cases.  
For instance, Katz, Rabinoff and  Zurich-Brown apply the first part of Clifford's theorem for the metrized complex associated to a semistable model of a curve, to bound the number of its rational points, thus proving a weak version of the Bombieri--Lang conjecture \cite{KRZ}. 

Similarly to the algebraic case, an analogue of the first part of Clifford's theorem follows immediately from Riemann--Roch. However, the methods used to prove the second part do not carry well into the tropical world. Nevertheless, it is shown in \cite{Coppens} that the full extent of Clifford's theorem holds for metric graphs. Our main result is an extension of the theorem  to metrized complexes as well.

\begin{theorem*}[\ref{mainTheorem}]
Let $\calC$ be a metrized complex of genus $g$,
% such that the underlying graph $G$ has positive genus $h$, 
and suppose that for some $0<r<g-1$ there is a divisor class $\delta$ of degree $2r$ and rank $r$. Then $\calC$ is hyperelliptic.
\end{theorem*}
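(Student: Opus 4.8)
The plan is to reduce to the defining case of a degree-$2$, rank-$1$ class and to extract that case from a Martens-type dimension estimate for Brill--Noether loci on $\calC$. Write $W^s_d(\calC)=\{[\mathcal L]\in\operatorname{Pic}^d(\calC):\operatorname{rank}(\mathcal L)\ge s\}$. If $r=1$ there is nothing to prove, since a class of degree $2$ and rank $1$ is exactly a hyperelliptic structure; so I assume $r\ge 2$, whence $g\ge 4$. I would avoid the specialization map to the underlying metric graph $\Gamma$ in the main line, and instead work intrinsically with $\operatorname{Pic}(\calC)$ and its Brill--Noether loci.

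First I would manufacture a large Brill--Noether locus out of $\delta$. For every effective divisor $E$ on $\calC$ of degree $r-1$, the basic inequality $\operatorname{rank}(\delta-E)\ge \operatorname{rank}(\delta)-\deg E=r-(r-1)=1$ shows that the class $\delta-[E]$, which has degree $r+1$, lies in $W^1_{r+1}(\calC)$. Hence the image of the Abel--Jacobi-type map $E\mapsto \delta-[E]$, as $E$ ranges over effective divisors of degree $r-1$, is contained in $W^1_{r+1}(\calC)$. Because $r-1<g$, a general effective divisor of degree $r-1$ has rank $0$, so $E\mapsto[E]$ is generically injective on classes and its image has dimension $r-1$; translating by the fixed class $\delta$ preserves dimension, giving
\[
\dim W^1_{r+1}(\calC)\ \ge\ r-1 .
\]
Note that $d:=r+1$ lies in the Martens range $2\le d\le g-1$ (using $r\le g-2$), and that every class in $W^1_{r+1}(\calC)$ is special by Riemann--Roch.

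Finally I would invoke a Martens-type theorem for metrized complexes: if $\calC$ is \emph{not} hyperelliptic and $2\le d\le g-1$, then $\dim W^1_d(\calC)\le d-3$. Applied with $d=r+1$ this reads $\dim W^1_{r+1}(\calC)\le r-2$, contradicting the lower bound above; therefore $\calC$ is hyperelliptic. The main obstacle is precisely this last input, an effective Martens inequality for $\calC$. I expect to deduce it from the metric-graph version together with the classical Martens theorem on the vertex curves $C_v$, the delicate point being to control how the dimension of $W^1_d(\calC)$ splits between the tropical part $\operatorname{Pic}(\Gamma)$ and the abelian parts $\operatorname{Jac}(C_v)$: the genus of $\Gamma$ may be far smaller than $g$, so one cannot simply specialize and quote the graph statement, and a secondary subtlety is making the notion of dimension and of a \emph{general} effective divisor precise on $\calC$. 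The structure theorem for hyperelliptic metrized complexes is what I would use to recombine the graph and curve cases into a genuine degree-$2$, rank-$1$ class on $\calC$.
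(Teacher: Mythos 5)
There is a genuine gap, and it is precisely the input you flag as "the main obstacle": the Martens-type inequality $\dim W^1_d(\calC)\le d-3$ for non-hyperelliptic $\calC$ is not available, and in the form you need it is actually \emph{false} already for metric graphs (which are metrized complexes with all components rational, so any proof for metrized complexes would have to cover them). The appendix of this very paper exhibits the graph $\Gamma$ of Theorem 1.2 of \cite{LPP}, which is not hyperelliptic yet has $\dim W^1_3(\Gamma)=1=d-2r$; that is, the dimension of the Brill--Noether locus attains the hyperelliptic bound on a non-hyperelliptic graph, so no inequality of the form $\dim W^1_d\le d-3$ characterizing non-hyperelliptic objects can hold. (Your $d=r+1$ specialization does not escape this: with $r=2$, $d=3$ you would need exactly $\dim W^1_3\le 0$ for non-hyperelliptic complexes, which the LPP example violates.) The paper's response to this failure is to replace the dimension by the Brill--Noether rank $w^r_d$, for which the Martens equality case is stated only as a conjecture. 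So your reduction rests on a statement that is false as formulated and at best conjectural after repair; moreover, since Clifford's theorem is the $d=2r$ case of Martens, deriving it from a Martens-type statement inverts the logical order in which these results are accessible. A secondary issue is the dimension count $\dim W^1_{r+1}(\calC)\ge r-1$: the Picard group of a metrized complex is a hybrid of a real torus and abelian varieties, and the genus of $\Gamma$ may be much smaller than $r-1$, so the image of $E\mapsto\delta-[E]$ need not have "dimension" $r-1$ in any single sense without a careful bookkeeping of tropical versus algebraic directions --- but this is moot given the failure of the main input.

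For contrast, the paper's proof avoids Brill--Noether dimension counts entirely. It constructs a canonical decomposition $\calK\simeq\calP+\calQ$ into two rigid effective divisors of degree $g-1$ whose restrictions to each component are rigid (Propositions \ref{canonicalRep} and \ref{canonicalRepLow}), shows via Riemann--Roch and rigidity that every effective representative of $\delta$ whose $\calP$-part has degree $r$ is supported on $\calP+\calQ$ (Lemma \ref{uniqueness}), proves that the induced correspondence between $r$-element subsets of $\calP$ and of $\calQ$ respects intersections and unions (Proposition \ref{intersection}), deduces a pairing $p_i\leftrightarrow q_i$ with all $p_i+q_i$ linearly equivalent, and finally extends $\calP$ to a rank-determining set to certify that $p_1+q_1$ has rank $1$. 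If you want a workable route, that combinatorial rigidity argument is the one to pursue.
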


By \emph{hyperelliptic} we mean a metrized complex having a divisor of degree $2$ and rank $1$.  For a smooth curve, the existence of such a divisor induces a double cover of a line. Similarly, for a 2-connected metric graph, such a divisor implies the existence of a harmonic map of degree 2 to a tree \cite[Theorem 1.3]{chan}. The analogous statement for metrized complexes is no longer true, as shown by Example 4.14 of \cite{ABBR2} and Remark 5.13 of  \cite{AB}. Nevertheless, just as hyperelliptic graphs can be pictured  as two isomorphic trees meeting at their leaves (\cite[Theorem 1.3]{chan}), we show that hyperelliptic metrized complexes consist of two isomorphic trees, meeting along hyperelliptic algebraic curves. See Lemma \ref{structureHE} for a precise statement. This description allows us to classify all the  effective divisors  whose degree is bounded by the genus. 
\begin{theorem*}[\ref{uniqueHyper}]
Let $\calC$ be a hyperelliptic metrized complex. Then every divisor class $\delta$ of degree $d$ and rank $r$ with $0\leq r\leq d\leq g$ is of the form $r\cdot\calg^1_2 + p_{2r+1}+\ldots+p_d$. 
\end{theorem*}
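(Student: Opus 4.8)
The plan is to induct on the rank $r$. The base case $r=0$ is immediate: a class of rank $0$ is effective and hence equals $p_1+\cdots+p_d$ for suitable points, which is the asserted form with no copies of $\calg^1_2$. For the inductive step I would prove the single-step statement that, for a class $\delta$ of degree $d\le g$ and rank $r\ge 1$, the class $\delta-\calg^1_2$ is effective of rank exactly $r-1$. Granting this, $\delta'=\delta-\calg^1_2$ has degree $d-2$ and rank $r-1$; the bounds $0\le r-1$ and $d-2\le g$ are clear, and $r-1\le d-2$ follows from the first part of Clifford's theorem, $d\ge 2r$, which holds on a metrized complex by Riemann--Roch (and which is also what guarantees that the $d-2r$ base points in the statement exist). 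The inductive hypothesis then expresses $\delta'$ as $(r-1)\calg^1_2$ together with $d-2r$ base points, and adding back the missing $\calg^1_2$ and relabelling the points as $p_{2r+1},\dots,p_d$ yields the theorem.

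The upper bound $\mathrm{rk}(\delta-\calg^1_2)\le r-1$ is formal once $\delta-\calg^1_2$ is known to be effective: since rank is superadditive under addition of classes of nonnegative rank (a standard property of the Baker--Norine rank that persists for metrized complexes), one has
\[
r=\mathrm{rk}(\delta)=\mathrm{rk}\big((\delta-\calg^1_2)+\calg^1_2\big)\ge \mathrm{rk}(\delta-\calg^1_2)+\mathrm{rk}(\calg^1_2)=\mathrm{rk}(\delta-\calg^1_2)+1 .
\]
Thus the whole single-step statement reduces to the lower bound $\mathrm{rk}(\delta-\calg^1_2)\ge r-1$, which in particular already contains the effectivity of $\delta-\calg^1_2$.

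The lower bound is the heart of the matter and the step I expect to resist. Unwinding the definition, it asks that for every effective $E$ of degree $r-1$ the class $\delta-\calg^1_2-E$ be effective; writing $\delta_0=\delta-E$, which has rank at least $1$, this amounts to exhibiting a point $p$ with $\delta_0-p-\iota(p)\ge 0$, where $\iota$ is the conjugation supplied by the structure theorem (Lemma \ref{structureHE}): on a hyperelliptic metrized complex $\iota$ exchanges the two isomorphic trees and restricts to the hyperelliptic involution on each curve $C_v$, and every fibre $p+\iota(p)$ represents $\calg^1_2$. The task is therefore to show that any class of positive rank dominates a full hyperelliptic fibre. I would establish this by passing to an $\iota$-symmetric reduced representative of $\delta_0$ at a well-chosen vertex and running Dhar's burning algorithm: positivity of the rank forces the reduced divisor to carry a chip that can only be extinguished together with its conjugate, which is precisely a fibre $p+\iota(p)$. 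Alternatively one can combine Lemma \ref{structureHE} with the Amini--Baker rank formula to reduce the claim to the corresponding statements on the hyperelliptic skeleton and on the individual hyperelliptic curves $C_v$, where it is classical. Locating the correct reduction point and controlling the interaction between chip-firing on the glued trees and the conjugation $\iota$ on the curve components is where the detailed geometry of Lemma \ref{structureHE} is indispensable, and is the part of the argument that will require the most care.
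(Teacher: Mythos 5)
Your reduction is sound and, modulo packaging, it is the paper's: everything comes down to the single claim that on a hyperelliptic metrized complex a class of positive rank and degree at most $g$ contains a fibre $p+\iota(p)$, i.e.\ contains $\calg^1_2$. (The paper does not iterate the subtraction of $\calg^1_2$ as you do; it fixes a point $p$ with $\iota(p)=p$, shows the $p$-reduced representative $\calD$ of $\delta$ has $\calD(p)\geq 2r$, and for $r>1$ reduces to the $r=1$ case by subtracting conjugates of auxiliary points. But the content is the same, and your formal upper bound via superadditivity of rank is fine.)

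The gap is in the one step you defer, which is the entire substance of the theorem. First, as you state it --- ``any class of positive rank dominates a full hyperelliptic fibre'' --- the claim is false: by Riemann--Roch a generic class of degree $g+1$ has rank at least $1$, yet subtracting $\calg^1_2$ leaves a generic class of degree $g-1$, which is not effective. So the hypothesis $\deg\leq g$ (which you do have available, since $\deg(\delta-E)=d-r+1\leq g$) is not decorative; any proof must use it, and your sketch never does. Second, the proposed mechanism --- an ``$\iota$-symmetric reduced representative'' plus Dhar's algorithm forcing a chip to be ``extinguished together with its conjugate'' --- is not yet an argument: reduced representatives need not be $\iota$-symmetric, and nothing in Dhar's algorithm pairs a chip with its conjugate. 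The paper's actual device is different: take the $p$-reduced representative $\calD$ at a point $p$ fixed by $\iota$ (so $2p\simeq\calg^1_2$); if $\calD(p)=1$, use $d\leq g$ (via Lemma 3.11 of \cite{AB}) to produce $q\neq p$ with $\calD+q$ still $p$-reduced; then $\calD+q-p$ and the $p$-reduced representative of $\delta+p-\iota(q)$ lie in the same class (because $q+\iota(q)\simeq 2p$) and are both $p$-reduced, so quasi-uniqueness forces equal multiplicity at $p$ --- but one has $0$ there and the other at least $1$. You would need to supply an argument of this kind (or genuinely carry out the reduction to the skeleton and the components that you mention in passing) for your induction to close.
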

\noindent In the appendix, we restrict ourselves to metric graphs, and discuss a possible tropical version of Martens' theorem, which is a refinement of Clifford's theorem. The results presented there are joint work with David Jensen.

Our strategy for proving the main theorem is partly inspired by the techniques used in \cite{Coppens}.  However, our argument is entirely self contained. In particular, it provides an independent proof of Clifford's theorem for graphs by considering metrized complexes in which all the components are rational. 
Before delving into the proof and introducing various notations, we begin with an example to demonstrates the strategy.
\begin{example}
Let $\calC$ be the metrized complex of genus 4, depicted in Figure \ref{example} (see Section \ref{sec:preliminaries} for notations regarding metrized complexes). All its edges have the same length,  the points $p_3,q_3,p_4,q_4$ are in the middle of the edges, and  $p,q$ are of of equal distance from $p_2,q_2$ respectively. Suppose that  $p_1+q_1$ and $p_4+q_4$ are equivalent to the pair of nodes corresponding to the incoming edges at their respective components. The points $p,p_1,p_2,p_3,p_4$ form a rank determining set for $\calC$, so in order to show that the complex is hyperelliptic, it suffices to find a divisor of degree $2$ passing through all of them. 

The divisor $\calD = 4\cdot (x)$ has rank $2$, and therefore, has a representative $\calD'$ containing $p_1+p_2$. It is straightforward to check that this representative is exactly $p_1+p_2+q_1+q_2$. Similarly, it has a representative containing $p_2+p_3$, namely, $p_2+p_3+q_2+q_3$. It follows that 
$$
p_1+p_2+q_1+q_2\simeq p_2+p_3+q_2+q_3,
$$
so by linearity, $p_1+q_1\simeq p_3+q_3$.
By repeating this process for all the different combinations $p_i+p_j$ and $p_j+p_k$, we see that 
$$
p_1+q_1\simeq p_2+q_2 \simeq p+q\simeq p_3+q_3 \simeq p_4+q_4.
$$
\noindent Therefore, $p_1+q_1$ has rank $1$, and $\calC$ is hyperelliptic.

\begin{figure}
\centering

\begin{tikzpicture}[scale=.5]

\coordinate (a) at (-9,0);
\coordinate (b) at (0,0);
\coordinate (c) at (9,0);

%Component on the left
%\node at ($(a)+(0,1.5)$) {$C_1$};
\draw (a) ellipse (3cm and 2cm);
\draw[thick] ($(a)+(-1.1,0.05)$) to [out=20,in=160] ($(a)+(1.1,0.05)$);
\draw[thick] ($(a)+(-1.4,0.15)$) to [out=-20,in=-160] ($(a)+(1.4,0.15)$);

%Middle component
\coordinate (b) at (0,0);
\draw (b) circle (2.3cm);
\draw[ thick] ($(b) + (-2.3,0)$) to [out=340,in=200] ($(b)+(2.3,0)$);
\draw[dashed, thick] ($(b) + (-2.3,0)$) to [out=20,in=160] ($(b)+(2.3,0)$);

%Rightmost component
\coordinate (c) at (9,0);
\draw (c) ellipse (3cm and 2cm);
\draw[thick] ($(c)+(-1.1,0.05)$) to [out=20,in=160] ($(c)+(1.1,0.05)$);
\draw[thick] ($(c)+(-1.4,0.15)$) to [out=-20,in=-160] ($(c)+(1.4,0.15)$);

%Edges
\draw[fill] ($(a) +(2,1)$) circle [radius=0.08];
\draw[fill] ($(b) +(-1.5,1)$) circle [radius=0.08];
\draw[ thick] ($(a) +(2,1)$) to [out=20,in=160] ($(b) +(-1.5,1)$);

\draw[fill] ($(a) +(2,-1)$) circle [radius=0.08];
\draw[fill] ($(b) +(-1.5,-1)$) circle [radius=0.08];
\draw[ thick] ($(a) +(2,-1)$) to [out=-20,in=-160] ($(b) +(-1.5,-1)$);

\draw[fill] ($(b) +(1.5,1)$) circle [radius=0.08];
\draw[fill] ($(c) +(-2,1)$) circle [radius=0.08];
\draw[ thick] ($(b) +(1.5,1)$) to [out=20,in=160] ($(c) +(-2,1)$);

\draw[fill] ($(b) +(1.5,-1)$) circle [radius=0.08];
\draw[fill] ($(c) +(-2,-1)$) circle [radius=0.08];
\draw[ thick] ($(b) +(1.5,-1)$) to [out=-20,in=-160] ($(c) +(-2,-1)$);

%Special points
\draw[fill,red] ($(-4.2,1.55)$) circle [radius=0.08];
\node [above] at ($(-4.2,1.55)$) {$p_2$};
\draw[fill,red] ($(-4.2,-1.55)$) circle [radius=0.08];
\node [below] at ($(-4.2,-1.55)$) {$q_2$};

\draw[fill,red] ($(-5.2,1.47)$) circle [radius=0.08];
\node [above left] at ($(-5.2,1.47)$) {$p$};
\draw[fill,red] ($(-5.2,-1.47)$) circle [radius=0.08];
\node [below left] at ($(-5.2,-1.47)$) {$q$};

\draw[fill,red] ($(4.2,1.55)$) circle [radius=0.08];
\node [above] at ($(4.2,1.55)$) {$p_3$};
\draw[fill,red] ($(4.2,-1.55)$) circle [radius=0.08];
\node [below] at ($(4.2,-1.55)$) {$q_3$};

\draw[fill,red] ($(a) +(-1.5,.9)$) circle [radius=0.08];
\node [left] at ($(a)+(-1.4,.9)$) {$p_1$};
\draw[fill,red] ($(a) +(-1.5,-.9)$) circle [radius=0.08];
\node [left] at ($(a)+(-1.4,-.9)$) {$q_1$};

\draw[fill,red] ($(c) +(1.5,.9)$) circle [radius=0.08];
\node [right] at ($(c)+(1.4,.9)$) {$p_4$};
\draw[fill,red] ($(c) +(1.5,-.9)$) circle [radius=0.08];
\node [right] at ($(c)+(1.4,-.9)$) {$q_4$};

\draw[fill,blue] ($(-0.5,1.5)$) circle [radius=0.08];
\node [above left] at ($(-0.4,1.4)$) {$x$};

\end{tikzpicture}

\caption{The metrized complex $\calC$}
\label{example}
\end{figure}
\end{example}
In the next section, we show that every metrized complex of genus $g$ which satisfies the conditions of the main theorem contains a divisor of degree $2g$, with similar properties to the divisor
 $\sum p_i + \sum q_i$ in the example above. 
\\
\\
\noindent\textbf{Acknowledgements.}
I started thinking about this problem some time ago, following a meeting of the Yale lunch seminar, and I thank all the participants for the fruitful discussion. I am grateful to Dave Jensen, Dhruv Ranganathan, Sam Payne and Nick Wawrykow for insightful remarks on a previous version of this paper, and for pointing out a gap in the proof of Proposition \ref{intersection}. In addition, I had enlightening conversations with Spencer Backman, Marc Coppens, Ohad Feldheim and Jifeng Shen.
While writing this paper, I was supported by DFG grant MA 4797/1-2.

\section{Preliminaries} \label{sec:preliminaries}

In what follows, we assume familiarity with the theory of tropical divisors and metrized complexes. We refer the reader to \cite{BJ} for an extensive exposition on the topic, and to \cite{AB} for a more thorough treatment. Roughly speaking, a metrized complex is a generalization of a metric graph,  obtained by placing smooth curves at the vertices, and defining linear equivalence in a way that combines chip firing on the graph and linear equivalence on the curves. 

\begin{definition}
A metrized complex is \emph{hyperelliptic} if it has a divisor of degree $2$ and rank $1$.
\end{definition}

We refer to the algebraic curves placed at the vertices as \emph{components}. For each vertex $v$, denote $C_v$ the corresponding component and $g_v$ its genus. The point of a component $C_v$ associated to an edge is referred to as a \emph{node}. The metric graph $\Gamma$, obtained by removing the components is called the \emph{underlying} graph of $\calC$. There is a natural map which takes divisors on $\calC$ to divisors on $\Gamma$. By abuse of notation, we often identify a divisor on $\calC$ with its image without mention. The elements of a divisor are referred to as \emph{chips}. For an algebraic curve or a metric graph we denote by $W^r_d$ the set of its divisor classes of degree $d$ and rank $r$, or simply $W_d$ when $r=0$.

\begin{definition} 
A divisor on a graph or a metrized complex is said to be \emph{rigid} if it is the unique effective divisor in its class. 
\end{definition} 

On an algebraic curve, a divisor is rigid if and only if it has rank zero. On metric graphs, rank zero is a necessary but not a sufficient condition for rigidness. However, as seen by the following lemma, rigid divisors are ubiquitous.

\begin{lemma}\label{rigidDivisor}
Let $\Gamma$ be a metric graph of genus $h>0$, and let $K$ be its canonical divisor. Then there is a divisor $P$ of degree $h-1$ such that $P$ and $K-P$ are rigid. Moreover, there is an open set of such divisors in the space
 $W_{h-1}(\Gamma)$ of effective divisor classes of degree $h-1$.
\end{lemma}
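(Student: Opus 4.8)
The plan is to reduce the statement to a genericity property of the tropical Abel--Jacobi map and then exploit the symmetry $P \mapsto K-P$. First, by the Riemann--Roch theorem for metric graphs, a divisor $D$ of degree $h-1$ satisfies $r(D) = r(K-D)$; in particular $D$ is equivalent to an effective divisor if and only if $K-D$ is, so the involution $\iota \colon [D] \mapsto [K-D]$ restricts to a self-homeomorphism of $W_{h-1}(\Gamma)$. Writing $U \subseteq W_{h-1}(\Gamma)$ for the locus of classes that are rigid (have a unique effective representative), one checks that $\{[P] : K-P \text{ is rigid}\} = \iota(U)$. Thus it suffices to prove that $U$ contains an open dense subset of $W_{h-1}(\Gamma)$: then $\iota(U)$ does as well, and the intersection $U \cap \iota(U)$ is a nonempty open set of classes $[P]$ for which both $P$ and $K-P$ are rigid, which is exactly the asserted open set.

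To analyze $U$, I would consider the Abel--Jacobi map $\Phi \colon \mathrm{Sym}^{h-1}(\Gamma) \to \mathrm{Pic}^{h-1}(\Gamma)$ from the space of effective divisors of degree $h-1$ (a compact polyhedral complex of dimension $h-1$) to the tropical Jacobian (a real torus of dimension $h$). By definition $W_{h-1}(\Gamma) = \Phi(\mathrm{Sym}^{h-1}(\Gamma))$, and a class is rigid precisely when its $\Phi$-fiber is a single point. The map $\Phi$ is affine on each cell, the derivative along moving a chip on an edge $e$ being the image of the unit tangent of $e$ in $H_1(\Gamma, \mathbb{R}) \cong \mathbb{R}^h$. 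Fixing a spanning tree and letting $e_1, \dots, e_h$ be the non-tree (cycle) edges, these images form a basis of $H_1(\Gamma, \mathbb{R})$; hence on the top cell $\sigma$ consisting of one chip in the interior of each of $e_1, \dots, e_{h-1}$, the $h-1$ edge directions are linearly independent and $\Phi|_\sigma$ is injective. This shows $\dim W_{h-1}(\Gamma) = h-1$ and that $\Phi$ is generically finite.

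The heart of the argument, and the step I expect to be the main obstacle, is to upgrade ``generically finite'' to ``generically one-to-one'', i.e.\ to show that the non-rigid locus $M = \{[D] \in W_{h-1} : \#\Phi^{-1}([D]) \geq 2\}$ is nowhere dense. Openness of $U = W_{h-1} \setminus M$ is routine from the piecewise-affine structure and properness of $\Phi$ (the source is compact), so the real content is density. Since every $[D] \in W_{h-1}$ lies in $\Phi(\sigma')$ for some top cell $\sigma'$, and the cells with $\dim \Phi(\sigma') < h-1$ contribute only a lower-dimensional set, it is enough to rule out a full-dimensional family of genuinely distinct equivalent pairs: an affine map $\phi \colon \sigma_0 \to \sigma'$ defined on a full-dimensional $\sigma_0 \subseteq \sigma$ with $\phi(P) \neq P$ and $\Phi(\phi(P)) = \Phi(P)$ for all $P$. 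Such a family would force the spans of the corresponding edge directions to coincide, $\mathrm{span}(e_1, \dots, e_{h-1}) = \mathrm{span}(f_1, \dots, f_{h-1})$ in $H_1(\Gamma, \mathbb{R})$, and I would derive a contradiction by comparing the two affine presentations on this common hyperplane, using that a nontrivial linear equivalence sliding chips from the $e_i$ to the $f_j$ over an open set is incompatible with the independence of cycle-edge classes.

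Once density of $U$ is established, the involution argument from the first paragraph produces the required open set of divisors $P$ with $P$ and $K-P$ both rigid, and any class in it furnishes the divisor of the statement. An alternative route for the density step, avoiding the cell-by-cell overlap analysis, is to work with reduced divisors: a divisor is rigid if and only if it is $v$-reduced for every $v \in \Gamma$, and one can try to show that a divisor supported on generic interior points of $h-1$ independent cycle edges satisfies this for all $v$ simultaneously. I expect the bookkeeping over all proper closed firing sets $S \subsetneq \Gamma$ to be the corresponding difficulty there.
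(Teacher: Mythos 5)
Your overall frame---show that the locus $U$ of rigid classes is open and dense in $W_{h-1}(\Gamma)$, then intersect $U$ with its image under the Riemann--Roch involution $[P]\mapsto[K-P]$ to obtain an open set of classes with both $P$ and $K-P$ rigid---is exactly the paper's, and that last step matches the paper's closing sentence. The genuine gap is the one you yourself flag: density of $U$ is never actually proved. Your reduction correctly disposes of second representatives lying in cells whose Abel--Jacobi image is lower-dimensional, but the remaining case---two distinct top-dimensional cells of $\mathrm{Sym}^{h-1}(\Gamma)$ whose images overlap in a full-dimensional set---is met only with the statement that you ``would derive a contradiction by comparing the two affine presentations.'' Equality of the spans of the two sets of cycle classes is not by itself a contradiction (two different $(h-1)$-element subsets of non-tree edges can perfectly well span the same hyperplane of $H_1(\Gamma,\mathbb{R})$), so one genuinely has to analyze the rational functions realizing the equivalences $x\sim\phi(x)$ over the open family, and nothing in the proposal does this. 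As written, the argument establishes that the Abel--Jacobi map is generically finite, not generically one-to-one, and the latter is the entire content of the density claim.

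The paper sidesteps the cell-overlap analysis with a single observation that is absent from your proposal: if a class of degree $h-1$ is non-rigid, then it has an effective representative with a chip at a vertex of $\Gamma$ (its linear system is a connected compact polyhedral complex with more than one point, hence positive-dimensional, and a chip reaches a vertex on the boundary of a positive-dimensional cell). The classes of such divisors form a polyhedral subset of dimension at most $h-2$, whereas $W_{h-1}(\Gamma)$, being the Minkowski sum of $h-1$ copies of the Abel--Jacobi image of $\Gamma$, is pure of dimension $h-1$; hence the rigid classes are dense, and openness plus your involution argument finish the proof. If you want to salvage your route, this ``chip at a vertex'' dichotomy is the missing ingredient; the alternative you sketch via reduced divisors would run into the same bookkeeping you anticipate and is likewise not carried out.
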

\begin{proof}
%By \cite[Lemma 3.5]{ABS}, there is an open dense set in $\text{Pic}_h(\Gamma)$ of divisors whose complement is a spanning tree, and in particular are rigid. Any divisor contained in a rigid divisor is rigid itself, so there is an open dense set of $W_{h-1}(\Gamma)$ of such divisors. 
The space $W_{h-1}(\Gamma)$ is the Minkowski sum of $h-1$ copies of $W_1(\Gamma)$  in the Jacobian of $\Gamma$. Since the latter is the image of $\Gamma$ under the Abel--Jacobi map, the former is a connected polyhedral complex of pure dimension $h-1$.
Let $D$ be a non-rigid effective divisor of degree $h-1$. Then $D$ has a representative in which at least one chip is at a vertex. The set of classes of such divisors has dimension strictly smaller than $h-1$. Therefore, there is a dense open set in $W_{h-1}(\Gamma)$ classifying rigid divisors. 
Since the map taking a divisor $P$ to $K-P$ is a linear bijection between $W_{h-1}$ to itself, there is an open dense set for which both $P$ and $K-P$ are rigid. 
\end{proof}

The following lemma is a useful tool for dealing with the graph and algebraic parts of divisors sepearately  (cf. \cite[Theorem 4.3]{Len1}  and \cite[Proposition 2.1]{AB}).
\begin{lemma}\label{localRank}
Let $\calD$ be a divisor of rank $r>0$,  let $r=r_0 + \sum_{v\in V(\Gamma)} r_v$ be a partition of $r$, and let $E$ be an effective divisor of degree $r_0$ on $\Gamma$. Then  $\calD$ is equivalent to an effective divisor that contains $E$, whose restriction to each component of $\calC$ has rank $r_v$.
\end{lemma}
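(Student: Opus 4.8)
The plan is to induct on the total budget $r=r_0+\sum_v r_v$, peeling off one unit of rank at a time while maintaining the invariant that whatever rank has not yet been ``spent'' is still carried by the part of the divisor that has not yet been fixed. Concretely, I would first dispose of the graph divisor $E$, reducing to the case $E=0$, and then localize the remaining rank $\sum_v r_v$ onto the components one unit at a time; splicing the two reductions together yields the statement. The base case $r=0$ is immediate, since rank $\geq 0$ means $\calD$ is equivalent to an effective divisor and all the constraints are vacuous.

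The graph part is the easy half. If $r_0\geq 1$, pick a chip $e$ of $E$ and lift it to a point $\tilde e$ of $\calC$ lying over it (so $\tilde e=e$ if $e$ is in the interior of an edge, and $\tilde e$ is any point of $C_{v_0}$ if $e$ sits at a vertex $v_0$). Since $r(\calD)\geq 1$, the divisor $\calD-\tilde e$ is equivalent to an effective divisor, so $\calD\sim \tilde e+\calD_1$ with $\calD_1$ effective; as the rank drops by at most one when a single point is subtracted, $r(\calD_1)\geq r-1$. Applying the inductive hypothesis to $\calD_1$ with the partition $(r_0-1)+\sum_v r_v$ and graph divisor $E-e$ produces a representative $\calD_1'$ whose image contains $E-e$ and with $r(\calD_1'|_{C_v})\geq r_v$. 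Then $\calD'=\tilde e+\calD_1'$ has image containing $E$, and its restriction to each component only grows, so the local ranks are preserved.

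The genuine difficulty is the component step: given $r(\calD)\geq \sum_v r_v$ with some $r_w\geq 1$, I must exhibit a \emph{single} representative whose restriction to $C_w$ has rank at least $r_w$. The subtlety is that the definition of $r(\calD)$ only supplies, for each fixed effective test divisor of degree $r_w$ on $C_w$, \emph{some} representative passing through it, and these representatives vary with the test divisor; what is needed is one representative that works for all of them at once, i.e. that genuinely realizes rank $r_w$ on the curve. The naive fix of adding a generic point of $C_w$ and recursing fails, because adding a generic point to a \emph{special} divisor on a curve does not raise its rank. The right tool is the reduced divisor machinery for metrized complexes of \cite{AB} (cf. \cite[Proposition 2.1]{AB} and \cite[Theorem 4.3]{Len1}): I would reduce $\calD$ with respect to a point $o$ in the interior of an edge away from $C_w$, so that the reduction pushes as much of the divisor as possible onto $C_w$, and then read off the rank of the resulting restriction, using Riemann--Roch on the component ($r(D)\geq \deg D-g_w$) to convert accumulated degree into rank. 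Iterating over the vertices and tracking that the rank spent is exactly $r-r_0$ gives $r(\calD'|_{C_v})\geq r_v$ for every $v$, and since these lower bounds sum to $r-r_0$ while the global rank equals $r$, they must be equalities. The main obstacle I anticipate is exactly this conversion of \emph{global} rank into the rank of a \emph{fixed} restriction: the reduced divisor yields a canonical representative, but its component parts are only node-reduced rather than a priori rank-maximal, so one must argue separately that reducing away from $C_w$ produces a configuration realizing the full local rank, and that this can be arranged simultaneously across all components.
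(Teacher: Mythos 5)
Your reduction of the graph part (peeling off $E$ one chip at a time and using $r(\calD-p)\geq r(\calD)-1$) is fine, but the component step---which you correctly identify as the crux---is not actually carried out, and the tools you propose for it do not work. Riemann--Roch on $C_v$ only gives $r(D)\geq \deg D-g_v$, so to conclude rank $\geq r_v$ you would need to accumulate degree at least $r_v+g_v$ on that component; nothing forces this, and in the paper's main application $\deg\calD=2r\leq 2g-4$ can be far smaller than $g_v$ for a single component. (Also, reducing with respect to a point $o$ away from $C_w$ moves chips \emph{toward} $o$, i.e.\ off of $C_w$, not onto it.) Finally, the simultaneity across all components is acknowledged but never established; fixing a representative that works for $C_w$ may destroy the property at another vertex, and iterating ``one component at a time'' has no mechanism to prevent this.

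The paper's proof closes exactly this gap by a finiteness argument rather than by induction or reduction. One only considers rational functions $\calf$ with $\calD+\operatorname{div}(\calf)\geq E$ effective; the vector $s_\calf$ of incoming slopes at the components takes only finitely many values, and for a fixed value $s$ the restrictions to $C_v$ of all such representatives lie in a single linear equivalence class on $C_v$. The set $T_{s,v}\subseteq W_{r_v}(C_v)$ of degree-$r_v$ test divisors dominated by a member of that class is closed, and equals all of $W_{r_v}(C_v)$ precisely when the class has rank $r_v$. Since $\calD$ has rank $r=r_0+\sum_v r_v$, the finitely many products $\prod_v T_{s,v}$ cover the irreducible variety $\prod_v W_{r_v}(C_v)$, so some single slope vector $s$ must have every factor equal to the whole space; any representative with that slope vector then realizes rank $r_v$ on every component simultaneously. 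This is the idea missing from your argument, and it is not recoverable from reduced divisors plus Riemann--Roch alone.
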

\begin{proof}
For each rational function $\calf$ on $\calC$ such that $\calD+\calf$  is effective and contains $E$, let $s_f$ be the collection of its incoming slopes at the components of $\calC$. Let $S$ be the set of all such $s_f$. %Note that there may be many different rational functions on $\calC$ with the same incoming slopes at all the nodes.
For $s\in S$ and a component $C_v$, let 
$$
T_{s,v} = \{D\in W_{r_v}(C_v)| \text{There exists $\calf$ with $s_\calf=s$ such that $D\leq \calD+\text{div}(\calf)$}\}.
$$ 
%That is, $T_{s,v}$ is the set of divisors of degree $g_v$ on $C_v$ that are contained in a divisor equivalent to $\calK$ whose slopes at $v$ are prescribed by $s$. 
The restriction to $C_v$ of divisors of the form $\calD+\text{div}(\calf)$ for $s_\calf=s$ are all equivalent to each other, and $T_{s,v}$  is the entire space $ W_{r_v}(C_v)$ if and only  those restrictions form a divisor class of rank $r_v$.

Let $T_s = \prod_{v\in V(\Gamma)} T_{s,v}$. Since $\calD$ has rank $r$, the union of all the sets $T_s$ is $ \prod_{v\in V(\Gamma)} W_{r_v}(C_v)$. Furthermore, since each $T_s$ is closed, and $S$ is finite, there must be $s\in S$ such that whenever $s_\calf=s$, the divisor $\calD+\text{div}(\calf)$ has rank $r_v$ at every component $C_v$. 
\end{proof}

For the rest of the section, we fix a metrized complex $\calC$ of genus $g>1$, whose underlying graph $\Gamma$ has genus $h$. Denote by $\calK$  the canonical class of $\calC$.

\begin{proposition}\label{canonicalRep}
Suppose that $h\geq 2$. Then there exist effective divisors $\calP, \calQ$ of degree $g-1$  such that:
\begin{enumerate}
\item  $\calP+\calQ\simeq \calK$.
\item  $\calP,\calQ$   are rigid.
\item For each component $C_v$ the restriction of $\calP,\calQ$ to $C_v$ is rigid and does not meet the nodes. 
\end{enumerate}
Moreover, there is an open set $\sigma$ classifying  such divisors in \linebreak$\prod_{v\in V(\Gamma)} W_{g_v}(C_v)\times W_{h-1}(\Gamma)$.
\end{proposition}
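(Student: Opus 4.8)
The plan is to build $\calP$ and $\calQ$ as the canonical divisor decomposed into a graph part and component parts, using the existing lemmas as a scaffold. First I would recall the structure of the canonical class $\calK$ of a metrized complex: by the formula of Amini--Baker, $\calK$ restricts on each component $C_v$ to the sum $K_{C_v} + \sum_{\text{nodes}} x_e$ of the canonical divisor of $C_v$ and the node divisor, and its image on $\Gamma$ is the canonical divisor $K_\Gamma$ of the underlying graph, which has degree $2h-2$. The total degree $2g-2$ splits as $\sum_v (2g_v - 2 + \deg(v)) + \text{(graph contribution)}$, and I want to split each piece in half symmetrically.

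The key steps, carried out componentwise and on the graph separately, are as follows. On each component $C_v$, the restricted canonical divisor $K_{C_v} + \sum_e x_e$ has degree $2g_v - 2 + \deg(v)$; I would choose an effective divisor $\calP_v$ of degree $g_v$ on $C_v$ that is rigid (equivalently of rank zero, since rigidity and rank zero coincide on a smooth curve) and avoids the finitely many nodes, with $\calQ_v := (K_{C_v} + \sum_e x_e) - \calP_v$ also rigid and node-avoiding. Because rank-zero divisors of degree $g_v$ form a dense open subset of $W_{g_v}(C_v)$ (the locus of special divisors has smaller dimension) and avoiding the finitely many nodes is a further open dense condition, such $\calP_v$ fill an open set in $W_{g_v}(C_v)$. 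On the graph side, I would invoke Lemma \ref{rigidDivisor} with $h \geq 2$ to produce an open set of divisors $P$ of degree $h-1$ in $W_{h-1}(\Gamma)$ for which both $P$ and $K_\Gamma - P$ are rigid; set the graph parts of $\calP,\calQ$ to be $P$ and $K_\Gamma - P$. Taking the product of these choices gives $\calP = \sum_v \calP_v + P$ and $\calQ = \sum_v \calQ_v + (K_\Gamma - P)$, each of total degree $\sum_v g_v + (h-1) = g-1$, satisfying (1) by construction.

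The remaining point is to promote componentwise and graph-level rigidity to rigidity of $\calP$ and $\calQ$ as divisors on the whole complex $\calC$, which gives condition (2); condition (3) is immediate from the construction. Here I would argue that if $\calP$ were not rigid, it would admit a distinct equivalent effective representative $\calP'$, and the rational function realizing the equivalence has a graph part and component parts; the graph part must be constant by rigidity of $P$ on $\Gamma$ (any nonconstant piecewise-linear function would move the image divisor, contradicting rigidity of $P$), forcing the slopes at all nodes to vanish, after which the equivalence restricts to a genuine linear equivalence on each component $C_v$, contradicting the rigidity of $\calP_v$. This decoupling of the graph and algebraic parts is precisely what Lemma \ref{localRank} is designed to handle, so I would route the argument through it rather than reprove the decoupling by hand. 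The main obstacle I anticipate is exactly this last reduction: on a metrized complex, linear equivalence intertwines chip-firing on $\Gamma$ with equivalence on the components through the slope/node bookkeeping, so one must verify carefully that rigidity cannot be destroyed by an interaction between the two kinds of moves --- rigidity of each piece separately does not automatically imply rigidity of the sum, and the open-set claim in the product $\prod_v W_{g_v}(C_v) \times W_{h-1}(\Gamma)$ requires that the rigid locus be open in each factor and that the property be preserved under the product, which follows once the decoupling is established.
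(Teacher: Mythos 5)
Your decomposition of the canonical class double-counts the graph contribution, and this breaks both the degree count and condition (1). The canonical divisor of a metrized complex is $\calK=\sum_v\bigl(K_{C_v}+A_v\bigr)$, where $A_v$ is the sum of the nodes of $C_v$; the degree $2h-2$ of the graph canonical divisor is already carried by the node points $A_v$, so there is no additional graph part to distribute. With your definitions, $\calQ_v=(K_{C_v}+A_v)-\calP_v$ has degree $g_v-2+\deg(v)$, not $g_v$, so $\calQ=\sum_v\calQ_v+(K_\Gamma-P)$ has degree $g+2h-3$, which exceeds $g-1$ whenever $h\geq 2$, and moreover $\calP+\calQ\simeq\calK+K_\Gamma\neq\calK$. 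The step you are missing is the one the paper performs first: use Lemma \ref{localRank} to replace $\calK$ by an equivalent effective divisor containing the graph divisor $P$ whose restriction to each $C_v$ has rank $g_v$; Clifford's theorem for curves forces that restriction to have degree at least $2g_v$, and pushing forward to $\Gamma$ and invoking the rigidity from Lemma \ref{rigidDivisor} pins the degree down to exactly $2g_v$, with the remaining $2h-2$ chips on the graph forming precisely $P+Q$. Only after the node points have been moved off the components in this way can you split each component's degree-$2g_v$ nonspecial restriction into two rigid halves of degree $g_v$ and set $\calP=\sum_v\calP_v+P$ and $\calQ=\sum_v\calQ_v+Q$. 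Note that this redistribution cannot be done component by component: moving chips off $C_v$ onto the adjacent edges requires nonzero slopes at the nodes, i.e.\ a genuine chip-firing move on $\Gamma$ coupled to the components, which is exactly what Lemma \ref{localRank} packages; you invoke that lemma only for the final rigidity decoupling, not where it is actually needed.

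A secondary point: your closing argument for rigidity of $\calP$ on $\calC$ appeals to rigidity of $P$ on $\Gamma$ to conclude that the graph part of any equivalence is constant, but the pushforward of $\calP$ to $\Gamma$ is $P+\sum_v g_v\cdot v$ rather than $P$, so rigidity of $P$ alone does not control it; the extra chips sitting at the vertices must be accounted for before the slopes at the nodes can be forced to vanish.
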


\begin{proof}
Fix rigid divisors $P+Q\simeq K$ on $\Gamma$ as in Lemma \ref{rigidDivisor}.  By Lemma \ref{localRank}, there is a representative $\calK'$ of $\calK$ that contains $P$, whose restriction to each component $C_v$ has rank $g_v$. 
Clifford's theorem for curves implies that this restriction has degree at least $2g_v$.
By subtracting $2g_v$ chips from each component, and forgetting the metrized complex structure, we  obtain a divisor which is equivalent to  the canonical divisor $K$ of $\Gamma$, and contains $P$. By Lemma \ref{rigidDivisor}, it is precisely the divisor $P+Q$. Therefore, the restriction of $\calK'$ to each component has exactly $2g_v$ chips. Since the set of rigid divisors of degree $g_v$ on $C_v$ is open and dense in $W_{g_v}(C_v)$, and the divisors $P,Q$ vary in  an open dense set of $W_{h-1}(\Gamma)$, there is an open dense set in the product satisfying the desired properties.
\end{proof}

\begin{proposition}\label{canonicalRepLow}
Suppose that $h$ is either $0$ or $1$. Then there exist effective divisors $\calP, \calQ$ of degree $g-1$  such that:
\begin{enumerate}
\item  $\calP+\calQ\simeq \calK$.
\item  $\calP,\calQ$   are rigid.
\item For each component $C_v$ the restriction of $\calP,\calQ$ to $C_v$ is rigid and does not meet the nodes. 
\end{enumerate}
Moreover, there is an  open set $\sigma$ classifying  such divisors in \linebreak$W_{g_0-1}(C_{v_0})\times\prod_{v_0\neq v\in V(\Gamma)} W_{g_v}(C_v)\times W_{h}(\Gamma)$ for some vertex $\text{ }v_0$.
\end{proposition}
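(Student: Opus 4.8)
The plan is to run the argument of Proposition \ref{canonicalRep} almost verbatim; the only difficulty is that Lemma \ref{rigidDivisor} is unavailable, because when $h\le 1$ the graph canonical $K_\Gamma$ has degree $2h-2\le 0$ and cannot be written as a sum of two effective rigid divisors of degree $h-1$. I would get around this by transferring one chip from a component of positive genus onto the graph. Since $g>1$ and $h\le 1$ we have $\sum_{v}g_v=g-h\ge 1$, so I may fix a vertex $v_0$ with $g_0:=g_{v_0}\ge 1$, and instead of splitting $K_\Gamma$ I would split the degree-$2h$ divisor $K_\Gamma+2v_0$. The first step is to produce rigid effective divisors $P',Q'$ of degree $h$ on $\Gamma$, supported off the vertices, with $P'+Q'\simeq K_\Gamma+2v_0$ and varying in an open subset of $W_h(\Gamma)$. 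For $h=0$ this is trivial, as $W_0(\Gamma)$ is a single point and $P'=Q'=0$. For $h=1$ the space $W_1(\Gamma)$ is one-dimensional (the image of the Abel--Jacobi map); a generic point is a rigid chip off the vertices, and since $P'\mapsto (K_\Gamma+2v_0)-P'$ is an involution of $\mathrm{Pic}^1(\Gamma)$ preserving the generic locus, both $P'$ and $Q'$ are rigid for $P'$ in an open dense subset, exactly as in the proof of Lemma \ref{rigidDivisor}.

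Next I would apply Lemma \ref{localRank} to $\calK$ with the partition $r=(g-1)=h+(g_0-1)+\sum_{v\neq v_0}g_v$ and with $E=P'$. This is legitimate precisely because lowering the contribution of $C_{v_0}$ from $g_0$ to $g_0-1$ makes the partition fit the available rank $g-1$ of $\calK$. The lemma yields a representative $\calK'$ of $\calK$ containing $P'$, whose restriction to $C_{v_0}$ has rank $g_0-1$ and to every other $C_v$ has rank $g_v$. Clifford's theorem for curves then gives $\deg(\calK'|_{C_{v_0}})\ge 2g_0-2$ and $\deg(\calK'|_{C_v})\ge 2g_v$ for $v\neq v_0$.

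The core of the argument is then to force these inequalities to be equalities. Forgetting the complex structure and passing to $\mathrm{Div}(\Gamma)$, I would set $R$ to be the image of $\calK'$ minus $(2g_0-2)v_0+\sum_{v\neq v_0}2g_v\,v$. This $R$ is effective of degree $2h$, equivalent to $K_\Gamma+2v_0$, and, since $P'$ avoids the vertices, it contains $P'$. Writing $R=P'+R''$ and using that $Q'$ is rigid forces $R''=Q'$, hence $R=P'+Q'$. But $P',Q'$ avoid the vertices while the ``excess'' part of $R$ is supported on vertices, so the excess must vanish: the restrictions of $\calK'$ have degree exactly $2g_0-2$ and $2g_v$, and the graph part of $\calK'$ is exactly $P'+Q'$. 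In particular $\calK'|_{C_{v_0}}$ has degree $2g_0-2$ and rank $g_0-1$, so it is equivalent to $K_{C_{v_0}}$.

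Finally I would split each component class into rigid halves as in Proposition \ref{canonicalRep}: choose a generic rigid $A_{v_0}\in W_{g_0-1}(C_{v_0})$ and put $B_{v_0}=K_{C_{v_0}}-A_{v_0}$ (rigid via the involution $A\mapsto K_{C_{v_0}}-A$ on $W_{g_0-1}(C_{v_0})$), and for $v\neq v_0$ split $\calK'|_{C_v}$ into rigid $A_v,B_v$ of degree $g_v$, which is possible because $W_{g_v}(C_v)$ is the whole Jacobian. Then $\calP=P'+A_{v_0}+\sum_{v\neq v_0}A_v$ and $\calQ=Q'+B_{v_0}+\sum_{v\neq v_0}B_v$ have degree $g-1$, satisfy $\calP+\calQ\simeq\calK$, and fulfil (3) by construction; the rigidity of $\calP,\calQ$ on $\calC$ in (2) and the openness of the locus of admissible choices follow verbatim from Proposition \ref{canonicalRep}, realising the open set $\sigma$ inside $W_{g_0-1}(C_{v_0})\times\prod_{v\neq v_0}W_{g_v}(C_v)\times W_h(\Gamma)$. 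I expect the main obstacle to be the bookkeeping in the third paragraph that forces the excess to vanish and pins down $\calK'|_{C_{v_0}}\simeq K_{C_{v_0}}$; this is exactly where replacing $K_\Gamma$ by $K_\Gamma+2v_0$ (equivalently, lowering the rank at $v_0$) is indispensable, since otherwise the partition would exceed the rank of $\calK$ when $h=0$ and would leave no free room in the graph part when $h=1$.
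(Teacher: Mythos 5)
Your proposal is correct and follows essentially the same route as the paper: choose a vertex $v_0$ of positive genus, apply Lemma \ref{localRank} with the rank at $v_0$ lowered to $g_{v_0}-1$, invoke Clifford's theorem on the components, and split the resulting representative into rigid halves. In fact your third paragraph (pinning the component degrees to the Clifford bounds by comparing the leftover graph divisor with the rigid splitting of the class $K_\Gamma+2v_0$) spells out a degree count that the paper's own proof merely asserts.
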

\begin{proof}
Choose a vertex $v_0$ with $g_{v_0}>0$, which exists since we assumed that $g>1$. By Lemma \ref{localRank}, together with Clifford's theorem for curves, the divisor $\calK$ is equivalent to a divisor whose restriction to $C_{v_0}$ has degree $2g_{v_0}-2$ and rank $g_{v_0}-1$,  its restriction to any other component $C_v$ has degree $2g_v$ and rank $g_v$, and its restriction to the edges of the underlying graph $\Gamma$ has $2h$ additional chips. Moreover, the restriction to each component can be chosen so that it consists of a pair of rigid divisors. Now choose $\calP$ to consist of the first summands of those pairs and $h$ of the graph points, and $\calQ$ to consist of the second summands.   
\end{proof}

Recall that a set $\calR$ is said to be \emph{rank determining} if it suffices to consider points of $\calR$ when computing the ranks of divisors. More precisely, $\calR$ is rank determining if the rank of every divisor $\calD$ is  the largest number $r$ such that $\calD-p_1-\ldots-p_r$ is equivalent to an effective divisor, for every choice of $p_1,\ldots, p_r$ in $\calR$.
The following lemma is a mild generalization of Theorem A.1 in \cite{AB}. We leave it for the reader to make the necessary changes.
\begin{lemma}\label{RDS}
Let $\calR$ be a divisor of degree $g+1$ with the following properties:
\begin{enumerate}
\item $\calR$ has $h+1$ graph points (where $h$ is the genus of $\Gamma$), and the graph obtained from $\Gamma$ by removing $h$ of them is a tree.
\item The restriction of $\calR$ to every component $C_v$ is a rigid divisor of degree $g_v$.
\end{enumerate}
Then $\calR$ is a rank determining set.
\end{lemma}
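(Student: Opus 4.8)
The plan is to follow the proof of Theorem A.1 in \cite{AB} and check that its argument uses only the two listed properties, so that the explicit set constructed there may be replaced by any $\calR$ satisfying (1) and (2). One inequality is free: since computing the $\calR$-rank tests fewer effective divisors than computing the rank, we always have $\mathrm{rank}(\calD)\le \mathrm{rank}_{\calR}(\calD)$, and the whole content of the lemma is the reverse inequality. I would establish it through the theory of reduced divisors on a metrized complex: for a point $p$ of $\calC$ there is a unique $p$-reduced representative in each class, a class has an effective representative passing through $p$ precisely when that representative has positive coefficient at $p$, and ranks can be read off from reduced representatives. Concretely, the goal becomes to show that if $\calD - p_1 - \cdots - p_r$ is equivalent to an effective divisor for every choice of $p_1,\dots,p_r\in\calR$, then the same holds for an arbitrary effective divisor of degree $r$ supported anywhere on $\calC$; following \cite{AB} this is carried out by the reduced-divisor scheme, detecting one chip at a time.

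The two hypotheses control the two kinds of directions in which chips can move. Condition (1) governs the purely graph-theoretic part. Removing the $h$ designated points turns $\Gamma$ into a tree $T$, so these points cut all the cycles of $\Gamma$, while the remaining graph point plays the role of a rank-determining point for $T$: on a tree every effective divisor of degree $d$ has rank $d$, so a single point already computes ranks there. This is exactly the configuration needed for the graph points to form a rank-determining set for $\Gamma$, and I would use it to treat the steps in which a reduction transports chips along the edges of $\Gamma$.

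Condition (2) governs the component directions: on each $C_v$ the prescribed part of $\calR$ is rigid of degree $g_v$, hence has rank $0$ and a unique effective representative, and this rigidity is what guarantees that the chosen points detect whether a class, restricted to $C_v$, can be made to pass through a given point of $C_v$. I would then analyze the $p$-reduction of $\calD$ on the metrized complex: each reduction step either transports chips along the metric graph --- witnessed by the graph points of $\calR$ via (1) --- or rearranges chips within a component or across a node --- witnessed by the rigid divisors of (2). The main obstacle, and the point where the argument of \cite{AB} must be checked most carefully, is precisely this interface at the nodes: one must rule out that a reduction pushes a chip between an edge and a component in a way that escapes detection by $\calR$. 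The rigidity of the component divisors is what prevents such a chip from remaining undetected, and verifying that the estimates of \cite{AB} are insensitive to replacing their explicit points by an arbitrary rigid divisor of degree $g_v$ is the only change required; this is the sense in which the lemma is a mild generalization.
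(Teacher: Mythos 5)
Your proposal matches the paper exactly: the paper gives no proof of Lemma \ref{RDS} beyond the remark that it is a mild generalization of Theorem A.1 of \cite{AB}, with the necessary changes left to the reader, which is precisely the adaptation you outline. Your account of how the two hypotheses enter --- condition (1) supplying a Luo-style rank-determining configuration of $h+1$ points on $\Gamma$ via the reduced-divisor machinery, and condition (2) replacing the general-position points on each $C_v$ by a rigid degree-$g_v$ divisor --- is consistent with that intended adaptation, so you have in fact supplied slightly more detail than the paper itself.
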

\begin{flushright}
\qedsymbol
\end{flushright}

%\begin{proof}
%By \cite[Lemma A.3]{AB}, it suffices to show that the reduced divisor at every component has rank at least $1$. Fix a component $C_v$. By assumption, the reduced divisor $D_v$ may contain every point of a rigid divisor of degree $g_v$ on $C_v$. Then it either has rank 1, in which case we are done, or consists precisely of this divisor. Let us show that it cannot be the latter. Assume that it is. By \cite[Lemma A.4]{AB}, there is an open set $U$ on $\Gamma$ containing $v$ where $D_x$ is constant, and has degree 0, or rank 0 when $x$ is a vertex. $D_v$ has  rank $1$ at each of the boundary points of $U$. By the assumption on $\calR$,  every boundary point of $U$ has a chip if it's a graph point, and if it's a component either has rank $1$, or contains $g_u$. In the latter case it cannot be a boundary point, because the fire from nearby points goes freely through it. So we can basically ignore all the components, and have a potential RDS of degree $h+1$ that breaks the graph. But that is obvious.
%\end{proof}

\section{Hyperelliptic metrized complexes} \label{sec:proof}
\subsection{Clifford's theorem}
In this section, we assume the existence a divisor class $\delta$ of degree $2r$ and rank $r$ for some $1<r<g-1$, and conclude that $\calC$ is hyperelliptic. 
Let $\sigma$ be the open set  of rigid  divisors constructed in Proposition \ref{canonicalRep}, \ref{canonicalRepLow}. By construction, for each $\calP'\in\sigma$, there is a unique $\calQ'\in\sigma$ such that $\calP'+\calQ'\simeq\calK$. 
%Let $e_1,\ldots e_{h-1}$ be the edges containing the chips of $P$, and $f_1,\ldots,f_{h-1}$  the edges containing the chips of $Q$. Let $U_v$  the open set of $\text{Pic}_{g_v}(C_v)$ of divisors of rank $0$ that are supported away from the nodes and have no double points. 
%Denote  $\sigma=\prod_{i=1}^{h-1} e_i\prod_{v} U_v$  and $\tau=\prod_{i=1}^{h-1} f_i\prod_v U_v$. Each element in $\sigma,\tau$ represents a distinct effective divisor class of degree $g-1$. For each $\calP'\in\sigma$ there is a unique $\calQ'\in\tau$ such that $\calP'+\calQ'\simeq\calK$, as in lemma \ref{canonicalRep}. 
Let $\mu:\sigma\to\sigma$ be the map which assigns $\calQ'$ to $\calP'$. Fix a pair $\calP\in\sigma$ and $\calQ = \mu(\calP)$.

\begin{definition}
For divisors $\calD,\calE$, denote  
$$
(\calD\cap \calE)(v)=\min(\calD(v),\calE(v))
$$
and
$$
(\calD\cup \calE)(v)=\max(\calD(v),\calE(v)).
$$ 
For a divisor $\calD$, we define its $\calP$-\emph{part} as $\calD^\calP=\calD\cap \calP$, and its $\calQ$-\emph{part} as $\calD^\calQ=\calD\cap \calQ$.
\end{definition}

\begin{lemma}\label{uniqueness}
If $\calD\in\delta$ is effective and $\deg(\calD^\calP)=r$, then $\calD$ is supported on $\calP+\calQ$. In particular, $\calD=\calD^\calP+\calD^\calQ$.
\end{lemma}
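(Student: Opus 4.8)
The plan is to prove the pointwise inequality $\calD\le\calP+\calQ$ and to read the asserted decomposition off from it. The divisors $\calP,\calQ$ coming from the open set $\sigma$ of Propositions~\ref{canonicalRep} and~\ref{canonicalRepLow} are generic, so after shrinking $\sigma$ I may assume their supports are disjoint; then at each point $v$ at most one of $\calP(v),\calQ(v)$ is nonzero, and $\calD\le\calP+\calQ$ forces $\calD(v)=\min(\calD(v),\calP(v))+\min(\calD(v),\calQ(v))$ for every $v$, which is exactly $\calD=\calD^\calP+\calD^\calQ$. Hence it suffices to establish $\calD\le\calP+\calQ$. Writing $B=\calD-\calD^\calP$, an effective divisor of degree $r$, I reduce everything to the single claim $B\le\calQ$: granting it, on $\mathrm{supp}\,\calP$ one has $B(v)\le\calQ(v)=0$, so $\calD(v)=\calD^\calP(v)\le\calP(v)$, on $\mathrm{supp}\,\calQ$ one has $\calD^\calP(v)=0$ and $\calD(v)=B(v)\le\calQ(v)$, and off both supports $\calD(v)=B(v)\le 0$; together these give $\calD\le\calP+\calQ$.

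To prove $B\le\calQ$ I would first produce an effective divisor equivalent to $\calQ-B$ and then exploit the rigidity of $\calQ$. By Riemann--Roch for metrized complexes the rank of $\calK-\calD$ equals $\deg(\calK-\calD)-g+1+r=(2g-2-2r)-g+1+r=g-1-r$. Since $\calD^\calP\le\calP$, the divisor $\calP-\calD^\calP$ is effective of degree $(g-1)-r$, which is precisely this rank; hence, by the definition of rank, $(\calK-\calD)-(\calP-\calD^\calP)$ is equivalent to an effective divisor $F$. Using $\calP+\calQ\simeq\calK$ we obtain
$$
\calQ-B=\calQ-\calD+\calD^\calP\simeq(\calK-\calD)-(\calP-\calD^\calP)\simeq F,
$$
so that $B+F\simeq\calQ$ with $B+F$ effective. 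As $\calQ$ is rigid, it is the unique effective divisor in its class, whence $B+F=\calQ$ and therefore $B=\calQ-F\le\calQ$, completing the reduction.

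The crux is this final rigidity trick: rewriting $\calQ-B$ as $\calQ$ minus an effective divisor of degree equal to the rank of $\calK-\calD$, and then using uniqueness of the effective representative of $\calQ$ to force $B\le\calQ$. This is what upgrades the soft fact that $\calK-\calD$ is equivalent to an effective divisor into control over the concrete representative $\calP+\calQ-\calD$. The one genuinely delicate input is the disjointness of the supports of $\calP$ and $\calQ$, without which the pointwise identity $\calD=\calD^\calP+\calD^\calQ$ can fail; I expect this to follow from the genericity of the pair $(\calP,\calQ)$ in $\sigma$, if necessary after passing to a smaller open subset on which disjointness (an open, dense condition) holds.
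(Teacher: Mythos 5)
Your proof is correct and follows essentially the same route as the paper: Riemann--Roch gives $\calK-\calD$ rank $g-1-r$, the definition of rank lets you subtract the effective divisor $\calP-\calD^\calP$ of exactly that degree to obtain an effective $F$ with $B+F\simeq\calQ$, and rigidity of $\calQ$ pins down the representative, forcing $\calD\le\calP+\calQ$. Your extra care about the disjointness of the supports of $\calP$ and $\calQ$ (which is genuinely needed for the ``in particular'' clause) is a point the paper leaves implicit in the genericity of $\sigma$, and your observation that it can be arranged by shrinking $\sigma$ is the right fix.
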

\begin{proof}
By Riemann--Roch, the divisor $\calK-\calD$ has rank $g-1-r$. Let $\calE$ be an effective divisor that contains $\calP-\calD^\calP$ and is equivalent to $\calK-\calD$. Then $\calD+\calE$ is canonical, and as such, equivalent to $\calP+\calQ$. Therefore, $\calD+\calE - \calP \simeq \calQ$. But $\calD+\calE - \calP$ is effective and $\calQ$ is rigid, so $\calD+\calE-\calP$ is exactly $\calQ$. By adding $\calP$ to both sides, $\calD+\calE = \calP+\calQ$.
\end{proof}

By now, we know that there is a correspondence between the subsets of size $r$ of $\calP$ and $\calQ$. Next, we show that the correspondence respects unions and intersections.

\begin{proposition} \label{intersection}
If $\calD_1,\ldots, \calD_k$ are effective representatives of $\delta$ such that $\deg(\calD_i^\calP)=r$ for each $i$, then
$$
\deg(\calD_1^\calP\cap\ldots\cap\calD_k^\calP) = \deg(\calD_1^\calQ\cap\ldots\cap\calD_k^\calQ)
$$ and 
$$
\deg(\calD_1^\calP\cup\ldots\cup\calD_k^\calP) = \deg(\calD_1^\calQ\cup\ldots\cup\calD_k^\calQ).
$$
\end{proposition}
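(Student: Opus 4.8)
The plan is to reduce the entire statement to the single numerical equality $a=b$, where $a=\deg(\calD_1^\calP\cap\cdots\cap\calD_k^\calP)$ and $b=\deg(\calD_1^\calQ\cap\cdots\cap\calD_k^\calQ)$, and to extract that equality from a sandwich between Clifford's inequality and super-additivity of the rank. First I would record the structural consequences of Lemma~\ref{uniqueness}: each $\calD_i=\calD_i^\calP+\calD_i^\calQ$ with $\calD_i^\calP\le\calP$ and $\calD_i^\calQ\le\calQ$, both of degree $r$. A pointwise check shows that no chip of any $\calD_i$ can sit simultaneously on $\calP$ and on $\calQ$, so $\cup$ and $\cap$ act independently on the two parts; writing $\calI=\calD_1\cap\cdots\cap\calD_k$ and $\calU=\calD_1\cup\cdots\cup\calD_k$, this gives $\calI^\calP=\calD_1^\calP\cap\cdots\cap\calD_k^\calP$, and similarly for $\calI^\calQ$, $\calU^\calP$, $\calU^\calQ$. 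Since $\deg(\calD\cap\calE)+\deg(\calD\cup\calE)=\deg\calD+\deg\calE$ holds at every point, the union statement for $\{\calD_i\}$ is exactly the intersection statement for the complementary family $\{\calK-\calD_i\}$, whose members represent $\calK-\delta$ and again have degree twice their rank. Hence it suffices to prove $a=b$.

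Next I would establish the upper bounds $\mathrm{rk}(\calI)\le a$ and $\mathrm{rk}(\calI)\le b$. Since $\calP$ is rigid it has rank $0$, and therefore so does every effective sub-divisor of it; in particular $\mathrm{rk}(\calP-\calI^\calP)=0$. Because $\calK-(\calI^\calP+\calQ)\simeq\calP-\calI^\calP$, Riemann--Roch together with this vanishing gives $\mathrm{rk}(\calI^\calP+\calQ)=\deg(\calI^\calP+\calQ)-g+1=a$. As $\calI=\calI^\calP+\calI^\calQ\le\calI^\calP+\calQ$, monotonicity of the rank yields $\mathrm{rk}(\calI)\le a$, and the symmetric argument gives $\mathrm{rk}(\calI)\le b$.

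The heart of the argument is the matching lower bound $\mathrm{rk}(\calI)\ge\tfrac12(a+b)$, which I would deduce from super-additivity of the rank, $\mathrm{rk}(\calD\cup\calE)+\mathrm{rk}(\calD\cap\calE)\ge\mathrm{rk}(\calD)+\mathrm{rk}(\calE)$, combined with Clifford's inequality on joins. For $k=2$ this is immediate: super-additivity gives $\mathrm{rk}(\calI)+\mathrm{rk}(\calU)\ge 2r$, while $\calU$ is special (as $\calK-\calU\simeq(\calP-\calU^\calP)+(\calQ-\calU^\calQ)$ is effective), so $\mathrm{rk}(\calU)\le\tfrac12\deg\calU=2r-\tfrac12(a+b)$, and subtracting yields the claim. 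For general $k$ I would induct, writing $\calI=(\calD_1\cap\cdots\cap\calD_{k-1})\cap\calD_k$ and applying super-additivity and Clifford to the pair $\bigl(\calD_1\cap\cdots\cap\calD_{k-1},\,\calD_k\bigr)$; the degree contributions telescope so that the hypothesis $\mathrm{rk}(\calD_1\cap\cdots\cap\calD_{k-1})\ge\tfrac12(a_{k-1}+b_{k-1})$ propagates to the full intersection. Combining the three estimates, $\tfrac12(a+b)\le\mathrm{rk}(\calI)\le\min(a,b)$ forces $a=b$, as desired.

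The delicate point is the super-additivity of the rank on a metrized complex. On each curve component it is formal, coming from the left-exact sequence $0\to H^0(\calD\cap\calE)\to H^0(\calD)\oplus H^0(\calE)\to H^0(\calD\cup\calE)$, but the chip-firing analogue on the underlying graph is not automatic, and I expect this to be the main obstacle: it is precisely the kind of inequality that is easy to invoke incorrectly. I would try to prove it via reduced divisors, or, failing a clean general statement, to establish directly only the weaker consequence actually needed here, exploiting that all the $\calD_i$ lie in the single class $\delta$ (so that for $k=2$ one has $\calU\simeq 2\delta-\calI$) in order to pin down the ranks of $\calU$ and $\calI$ through Riemann--Roch.
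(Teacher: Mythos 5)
Your opening reductions are sound: Lemma \ref{uniqueness} does give $\calD_i=\calD_i^\calP+\calD_i^\calQ$; the pointwise identity $\min+\max=\text{sum}$ does let you trade the union statement for the intersection statement applied to the complementary representatives $\calP+\calQ-\calD_i$ of $\calK-\delta$ (which again has degree twice its rank, namely $g-1-r$); and your upper bound $\mathrm{rk}(\calD_1\cap\cdots\cap\calD_k)\le\min(a,b)$ via Riemann--Roch and the rigidity of $\calP$ and $\calQ$ is correct. The gap is exactly where you suspect it is, and it is fatal as written: the entire lower bound $\mathrm{rk}(\calD_1\cap\cdots\cap\calD_k)\ge\tfrac12(a+b)$ rests on the supermodularity $\mathrm{rk}(\calD\cup\calE)+\mathrm{rk}(\calD\cap\calE)\ge\mathrm{rk}(\calD)+\mathrm{rk}(\calE)$, which you do not prove and for which no proof is available on metrized complexes or metric graphs. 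The curve-theoretic argument uses the linear structure of $H^0$ (the kernel of $H^0(D)\oplus H^0(E)\to H^0(D\cup E)$ lands in $H^0(D\cap E)$), and the set of effective representatives of a divisor class on a graph carries no such structure; Baker--Norine rank is defined combinatorially and is not known to satisfy this inequality. Your fallback for $k=2$ does not rescue this: knowing $\calD_1\cup\calD_2\simeq 2\delta-(\calD_1\cap\calD_2)$ and applying Riemann--Roch relates $\mathrm{rk}(\calD_1\cap\calD_2)$ to $\mathrm{rk}(\calK-\calD_1\cap\calD_2)$, not to $\mathrm{rk}(\calD_1\cup\calD_2)$, so it yields no lower bound on the sum of the two ranks. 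Until the supermodularity (or the precise consequence of it that you need) is established, the proof does not close.

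For contrast, the paper's proof avoids rank inequalities entirely. It works with the space $\sigma$ of rigid decompositions $\calP'+\calQ'\simeq\calK$ from Propositions \ref{canonicalRep} and \ref{canonicalRepLow} and the bijection $\mu(\calP')=\calQ'$; by perturbing $\calD_i^\calP$ inside a nearby $\calP'$ it shows that $\mu$ carries the locus of divisors in $\sigma$ containing $\calP-\calD_i^\calP$ into the locus of those containing $\calQ-\calD_i^\calQ$, hence carries divisors containing $\calP-(\calD_1^\calP\cap\cdots\cap\calD_k^\calP)$ bijectively to divisors containing $\calQ-(\calD_1^\calQ\cap\cdots\cap\calD_k^\calQ)$, and comparing these two loci forces the two intersection degrees to agree; inclusion--exclusion then handles the unions (your complementary-family trick is an acceptable substitute for that last step). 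If you want to keep your sandwich structure, the missing ingredient is a direct lower bound on $\mathrm{rk}(\calD_1\cap\cdots\cap\calD_k)$ obtained by exhibiting representatives rather than by invoking a general rank inequality.
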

\begin{proof}
For each $i=1,\ldots,k$, let $\sigma_i$  be the subset of $\sigma$ consisting of divisors that contain $\calP-\calD_i^\calP$. Similarly, let $\tau_i$ be the subset of $\sigma$ of divisors containing $\calQ-\calD_i^\calQ$. 
%Then $\deg(\calD_i^\calQ\cap\ldots\cap\calD_k^\calQ) =\dim(\tau_1\cap\ldots\cap\tau_k)$, so to prove the first part of the proposition, it suffices to show that $\dim(\tau_1\cap\ldots\cap\tau_k) = \dim(\sigma_1\cap\ldots\cap\sigma_k)$.
We claim that the image of $\mu|_{\sigma_i}$ is contained in $\tau_i$ for each $i$.   Indeed, let $\calP_i\in\sigma_i$, and let 
$\calE_i = \calP_i - \calP + \calD_i^\calP$.
By definition of $\sigma_i$, the divisor $\calP_i$ contains $\calP-\calD_i^\calP$, so $\calE_i$ is effective (in fact, $\calE_i$ can be thought of as a divisor obtained by perturbing $\calD_i^\calP$). Since $\deg(\calE_i)=r$, there is a representative $\calD_i'$ of $\delta$ which contains $\calE_i$. Now, $\calP+\mu(\calP)+\calD_i' -\calD_i$ is effective because $\calD_i$ is contained in $\calP+\mu(\calP)$, it is equivalent to $\calK$ and contains $\calP_i$, so by the definition of $\mu$, it equals $\calP_i+\mu(\calP_i)$. It follows that $\mu(\calP_i)$ contains $\calK-\calD_i$, and in particular,  is in $\tau_i$. We conclude that the restriction of $\mu$ to $\sigma_1\cap\ldots\cap\sigma_k$ maps to $\tau_1\cap\ldots\cap\tau_k$. Since $\mu$ is a bijection, there is a one to one correspondence between divisors in $\sigma$ containing $\calP - \calD_1^\calP\cap\ldots\cap\calD_k^\calP$ and divisors containing $\calQ - \calD_1^\calQ\cap\ldots\cap\calD_k^\calQ$. In particular, $\calD_1^\calP\cap\ldots\cap\calD_k^\calP$ and $\calD_1^\calQ\cap\ldots\cap\calD_k^\calQ$ must have the same degree.

Now, by the  inclusion--exclusion principle, the cardinality of a union can be expressed as an alternating sum of the cardinalities of different intersection, which proves the second part of the proposition.
\end{proof}

We are finally in a position to prove our main theorem.

\begin{theorem}\label{mainTheorem}
Let $\calC$ be a metrized complex of genus $g$,
and suppose that for $0<r<g-1$ there is a divisor class $\delta$ of degree $2r$ and rank $r$. Then $\calC$ is hyperelliptic.
\end{theorem}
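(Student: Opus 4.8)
The plan is to build, from the rank-$r$ degree-$2r$ class $\delta$ and the canonical splitting $\calP+\calQ\simeq\calK$, a single degree-$2g$ divisor whose structure forces a degree-$2$ rank-$1$ class, and then to verify hyperellipticity on a rank determining set using Lemma \ref{RDS}. First I would handle the excluded endpoint: the statement is for $0<r<g-1$, whereas the body of the section assumes $1<r<g-1$, so I would first reduce the case $r=1$ directly (a degree-$2$ rank-$1$ class is exactly the definition of hyperelliptic) and then treat $1<r<g-1$ with the machinery developed above. Fixing $\calP\in\sigma$ and $\calQ=\mu(\calP)$, Lemma \ref{uniqueness} tells us that every effective $\calD\in\delta$ with $\deg(\calD^\calP)=r$ is supported on $\calP+\calQ$, so choosing a representative of $\delta$ through each degree-$r$ subdivisor of $\calP$ gives a family of effective divisors all living inside the fixed configuration $\calP+\calQ$.

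The heart of the argument is to exploit Proposition \ref{intersection}, which says the induced correspondence between size-$r$ subsets of $\calP$ and of $\calQ$ commutes with unions and intersections. The next step is to promote this degree-matching statement into an actual bijection $\calP\leftrightarrow\calQ$ of chips that is compatible with linear equivalence: by applying the proposition to pairs and triples of representatives and tracking which chips lie in common intersections, I would pin down, for each chip of $\calP$, a matching chip of $\calQ$ so that $p+q\simeq p'+q'$ for any two matched pairs. Concretely, for matched pairs $(p_1,q_1)$ and $(p_2,q_2)$ one finds a representative of $\delta$ containing $p_1+p_2$, whose complement in $\calP+\calQ$ must then be $q_1+q_2$ (this is exactly the mechanism illustrated in the worked example with $\calD=4\cdot(x)$), yielding $p_1+p_2+q_1+q_2\simeq p_2+\cdots$ and hence $p_1+q_1\simeq p_2+q_2$ by cancellation. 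Linking all pairs through shared indices then shows all the divisors $p_i+q_i$ are linearly equivalent to a single class $\calg$ of degree $2$.

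Finally I would show $\calg$ has rank $1$, i.e.\ that the common class $p_i+q_i$ moves. Since the matched pairs sweep out all of $\calP+\calQ$ and $\calP$ is chosen generically within the open set $\sigma$ (so its chips, together with the requisite graph points, can be arranged to contain a rank determining configuration in the sense of Lemma \ref{RDS}), the condition "$\calg$ passes through an arbitrary point of the rank determining set" follows from the equivalences $p_i+q_i\simeq p_j+q_j$: given any test point, it is one of the matched chips (or can be taken to be, by varying $\calP$ within $\sigma$), and the matching produces an equivalent effective divisor containing it. Hence $\deg\calg=2$, $\mathrm{rank}(\calg)\geq 1$, and $\calC$ is hyperelliptic.

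I expect the main obstacle to be the middle paragraph: upgrading the purely numerical union/intersection identities of Proposition \ref{intersection} into a genuine, equivalence-respecting bijection of individual chips. The degree equalities alone do not immediately say \emph{which} chip of $\calQ$ matches a given chip of $\calP$, and one must argue that the lattice structure of subsets (via enough inclusion--exclusion relations among the $\calD_i^\calP$) forces a well-defined chip-level pairing; the genericity of $\calP$ inside $\sigma$, which guarantees the chips are in sufficiently general position and that the restrictions to each component avoid the nodes, is what makes this combinatorial extraction possible, and threading that genericity carefully through the argument is the delicate part.
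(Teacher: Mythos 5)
Your plan follows the paper's proof essentially step for step: fix $\calP,\calQ$ from Propositions \ref{canonicalRep} and \ref{canonicalRepLow}, use Lemma \ref{uniqueness} and Proposition \ref{intersection} to extract a chip-level pairing $p_i\leftrightarrow q_i$ with all $p_i+q_i$ linearly equivalent, and then verify rank $1$ by extending $\calP$ to a rank determining set via Lemma \ref{RDS} (your separate treatment of $r=1$, where the hypothesis is literally the definition of hyperelliptic, is the right way to reconcile the theorem's range $0<r<g-1$ with the section's standing assumption $1<r<g-1$). The one step you flag as delicate --- pinning down \emph{which} chip of $\calQ$ is matched to a given chip of $\calP$ --- is exactly where the paper applies Proposition \ref{intersection} to the family $S_i$ of representatives avoiding $p_i$, whose $\calQ$-parts then union to all of $\calQ$ minus a single point $q_i$; your proposal to intersect the $\calQ$-parts of representatives through $p_i$ accomplishes the same thing, so the approaches coincide.
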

\begin{proof}
For each $p_i$ in $\calP$, Let
$$
S_i = \{\calD\in\delta | \deg(\calD^P) = r, p_i\notin \calD\}.
$$
Let $\phi$ be the map that assigns to every subset $A$ of size $r$ of $\calP$ the unique subset $B$ of $\calQ$ for which $A+B$ is in the divisor class $\delta$.
By Proposition \ref{intersection}, $\underset{\calD\in S_i}\cup\calD^\calQ$ contains all but a single point, $q_i$, of $\calQ$. Since the assignment $\calD^\calP\to \calD^\calQ$ is a bijection between the subsets of  size $r$ of $\calP$ and $\calQ$, we can reverse the process and conclude that $q_i\neq q_j$ when $i\neq j$. In particular, any divisor $\calD\in\delta$ whose  $\calP$-part has degree $r$, contains  $q_i$ if and only if it contains $p_i$. 
%Therefore, we obtain a bijection $\phi_0:\text{supp}(\calP)\to\text{supp}(\calQ)$ which maps $p_i$ to $q_i$, and such that $\phi(\{p_{i_1},\ldots,p_{i_r}\}) = (\{\phi_0(p_{i_1}),\ldots,\phi_0(p_{i_r})\})$.

We claim that all the divisors $p_i+q_i$ are equivalent.  Indeed, choose $\calD_i$ and $\calD_j$ such that $p_i+q_i\leq\calD_i$, $p_j+q_j\leq\calD_j$, and  $\deg(\calD^\calP_i\cap \calD^\calP_j)=r-1$. 
By the discussion above, $\deg(\calD_i\cap \calD_j)=2r-2$, so
%By Lemma \ref{intersection}, the rest of their $\calQ$-parts coincide. Therefore, 
$$0\simeq \calD_i - \calD_j = p_i+q_i - p_j-q_j,$$
so $p_j+q_j \simeq q_i+q_j$. To show that $p_1+q_1$ has rank $1$, we need to extend $\calP$ to a rank determining set. 

We first deal with the case $h\geq 2$.
Let $p_1,\ldots, p_{h-1}$ be the graph points of $\calP$. Find a point $p_h$ so that the complement of $p_1,\ldots, p_h$ in $\Gamma$ is a spanning tree. The point $p_h$ may be chosen so that $\calP'=\calP-p_1+p_h$ is  in $\sigma$. By construction, the assignments $\calD^\calP\to\calD^\calQ$ and $\calD^{\calP'}\to\calD^{\calQ'}$   coincide for any divisor $\calD\leq\calP\cap\calP'$ and therefore the bijection above extends to $\calP\cup\calP'$. In particular, we have an equivalence $p_1+q_1\simeq p_h+q_h$ for some $q_h$. 
Finally, let $p$ be any other graph point contained in a representative of $p_1+q_1$. By Lemma \ref{RDS},  $\calP\cup\{p_h,p\}$ is a rank determining set, so $p_1+q_1$ has rank $1$.

Now, suppose that $h=0,1$. Then by the construction in \ref{canonicalRepLow}, there is a vertex $v_0$ such that the restriction $P_0$ of $\calP$ to $C_{v_0}$ has degree $g_{v_0}-1$. Find a point $p_0$ so that $P_0+p_0$ is rigid, and a graph point $p$ that is contained in a representative of $p_1+q_1$. 
Similarly to the higher genus case,  $\calP\cup\{p_0,p\}$ is rank determining,  so $p_1+q_1$ has rank 1. 
The proof is complete.
\end{proof}

\subsection{The structure of hyperelliptic metrized complexes}
To complete our discussion on metrized complexes, we show that being hyperelliptic imposes strong conditions on their structure. The following characterization is familiar to experts, but to the extent of our knowledge, does not appear in the literature. For a visual illustration of the lemma,  see Figure \ref{example}.
\begin{lemma}\label{structureHE}
A metrized complex $\calC$ is hyperelliptic if and only if it satisfies the following properties:
\begin{enumerate}
\item The underlying graph $\Gamma$ is either a tree or hyperelliptic with involution $\iota_\Gamma$ (if $\Gamma$ is a tree then $\iota_\Gamma$ is just the identity).
\item If $C_v$ has genus $g_v>0$, then $\iota_\Gamma(v)=v$. For every node $p$  corresponding to an edge $e$, the edge $\iota_\Gamma(e)$  meets $C_v$ at a node $p'$, and all such divisors $p+p'$ are equivalent.
\item If $g_v\geq 2$, then $C_v$ is hyperelliptic with involution $\iota_v$, satisfying $\iota_v(p)=p'$ for every node $p$.
\end{enumerate}
\end{lemma}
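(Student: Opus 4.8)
# Proof Proposal for Lemma \ref{structureHE}

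The plan is to prove both directions of the equivalence, with the forward direction (hyperelliptic implies the structural properties) being the substantive part, since the converse amounts to exhibiting the degree-$2$ rank-$1$ divisor directly. For the forward direction, I would start from a divisor $\calg^1_2$ of degree $2$ and rank $1$. The first step is to analyze its image under the natural map to the underlying graph $\Gamma$. Since rank can only drop under specialization is false in general, but the map to $\Gamma$ is rank-nonincreasing, so the image has rank at most $1$; I would instead argue that the \emph{metrized complex} structure forces a harmonic-type behavior. Concretely, I would use Theorem \ref{mainTheorem} together with Theorem \ref{uniqueHyper} to pin down that every effective divisor of degree $2$ in the class $\calg^1_2$ either lives entirely on a single component $C_v$ (forcing $C_v$ hyperelliptic and $g_v \geq 1$) or distributes two chips across the graph and components. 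Tracking how these representatives move as we require them to pass through arbitrary points gives the involution.

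The key steps, in order, would be: (i) Show $\Gamma$ is hyperelliptic or a tree. Applying the specialization of $\calg^1_2$ to $\Gamma$ and invoking \cite[Theorem 1.3]{chan}, a degree-$2$ rank-$1$ divisor on a $2$-connected graph yields a harmonic degree-$2$ map to a tree, hence an involution $\iota_\Gamma$; the tree case is the degenerate subcase. (ii) Establish property (2) by a genus-counting/continuity argument: for a component $C_v$ with $g_v > 0$, I would show that the two chips of a representative of $\calg^1_2$ cannot separate onto distinct positive-genus components without violating Clifford (Theorem \ref{mainTheorem}) applied to the canonical setup, forcing $\iota_\Gamma(v) = v$; the equivalence of the divisors $p + p'$ then follows because all representatives of the single class $\calg^1_2$ are linearly equivalent, and the node-pairs $p+p'$ are precisely the specializations of these representatives to $C_v$. (iii) For property (3), restrict $\calg^1_2$ to $C_v$ when $g_v \geq 2$: the pairs $p + p'$ furnish a one-dimensional linear system of degree $2$ on $C_v$, i.e.\ a $\calg^1_2$ on the curve, so $C_v$ is hyperelliptic with the induced involution $\iota_v$ matching $p \mapsto p'$.

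The main obstacle I anticipate is step (ii): the interaction between the graph involution $\iota_\Gamma$ and the components. The difficulty is that a representative of $\calg^1_2$ need not keep both its chips on the graph — one or both may sit on a positive-genus component — so the clean dichotomy of the purely-graph case in \cite{chan} breaks down, which is exactly why \cite[Example 4.14]{ABBR2} shows the naive harmonic-map statement fails. To handle this I would argue that if $g_v > 0$ but $\iota_\Gamma(v) \neq v$, then by moving chips of the canonical divisor using Lemma \ref{localRank} one produces a divisor contradicting the rigidity structure of Proposition \ref{canonicalRep}, or else violating the degree-$2r$/rank-$r$ balance that Theorem \ref{mainTheorem} and Proposition \ref{intersection} impose. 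The bookkeeping of which chips lie on $\Gamma$ versus on the $C_v$, and ensuring the node-pairing $p \mapsto p'$ is well-defined and independent of the chosen representative, is where the care is required.

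For the converse direction, assuming (1)--(3), I would explicitly build a rank-$1$ degree-$2$ divisor: take a generic point $x$ on a component (or edge) and its image $\iota(x)$ under the involution induced by $\iota_\Gamma$ and the $\iota_v$; the hypotheses guarantee $x + \iota(x)$ is independent of $x$ up to linear equivalence (this is precisely what the equivalences in (2) and the compatibility in (3) encode), so the class has rank exactly $1$, and $\calC$ is hyperelliptic. This direction should be routine once the structural data is assumed, since it only requires verifying that the chip-firing moves realizing $x + \iota(x) \simeq y + \iota(y)$ can be assembled from the graph-level involution and the curve-level involutions.
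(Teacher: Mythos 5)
Your overall architecture (Chan's theorem for the graph, restriction to components for the curve-level structure, explicit construction for the converse) matches the paper's, but the central step of the forward direction has a genuine gap, and one supporting claim is stated backwards.

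First, the rank inequality. You write that the map to $\Gamma$ is ``rank-nonincreasing, so the image has rank at most $1$.'' This is the wrong direction, and with only an upper bound on the rank of the pushforward you cannot invoke \cite[Theorem 1.3]{chan} at all. The correct and needed fact, which the paper uses, is that rank can only \emph{increase} when passing from $\calC$ to the underlying graph $\Gamma$, so the image of $x+x'$ has rank \emph{at least} $1$ on $\Gamma$; Chan's theorem then applies.

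Second, and more seriously, your step (ii) --- showing that $\iota_\Gamma(v)=v$ whenever $g_v>0$ and that the node pairs $p+p'$ lie in the class $\calg^1_2$ --- is exactly where you say the difficulty lies, and your proposed resolution (deriving a contradiction with Theorem \ref{mainTheorem}, Proposition \ref{intersection}, or the rigidity of Proposition \ref{canonicalRep}) does not work: those results go in the opposite direction (they \emph{produce} a $\calg^1_2$ from a special divisor) and give no mechanism for constraining where the chips of a given rank-$1$ class can sit. The paper's tool here is Lemma \ref{localRank}: since $x+x'$ has rank $1$, it has a representative whose restriction to $C_v$ has rank at least $1$; when $g_v>0$ such a restriction must have degree $2$, so both chips land on $C_v$, which forces $\iota_\Gamma(v)=v$, and when $g_v\geq 2$ it makes $C_v$ hyperelliptic. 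Your claim that the node pairs $p+p'$ ``are precisely the specializations of these representatives'' presupposes that $p+p'$ is a representative of $\calg^1_2$, which is the thing to be proved; the paper instead takes points $y,y'$ on the edges $e$ and $\iota_\Gamma(e)$ at equal distance $\epsilon$ from $v$, notes $x+x'\simeq y+y'$, and lets $\epsilon\to 0$ to conclude $x+x'\simeq p+p'$. Without Lemma \ref{localRank} and this limiting argument, properties (2) and (3) are not established. Your converse direction is fine in outline and matches the paper's (which verifies rank $1$ via Dhar's burning algorithm applied to reduced divisors).
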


\begin{proof}
Suppose that $\calC$ is hyperelliptic, and let $x+x'$ be a divisor of degree 2 and rank 1. 
When passing to the underlying graph,  rank may only increase, so $x+x'$ has rank at least 1 on $\Gamma$. By \cite[Theorem 1.3]{chan},  $\Gamma$ is either a tree or a hyperelliptic graph with involution $\iota_\Gamma$. Let $v$ be a vertex of $\Gamma$. By Lemma \ref{localRank},  there is a divisor equivalent to $x+x'$ whose restriction to  $C_v$ has rank at least 1. When $g_v>0$, it implies that this restriction has degree $2$. In particular, $v$ is a point of $\Gamma$ with $\iota_\Gamma(v)=v$.
When $g_v>1$, we conclude that $C_v$ is hyperelliptic with involution $\iota_v$. Let $p,p'$ be a pair of nodes on $C_v$ corresponding to edges $e'=\iota_\Gamma(e)$. Then for every pair of points $y,y'$ on $e,e'$ at equal distance $\epsilon$ from $v$, $x+x'\simeq y+y'$. By letting $\epsilon$ tend to zero, we see that $x+x'\simeq p+p'$.
 
Conversely, suppose that the conditions above are satisfied. Let $x$ be a graph point of $\Gamma$, and let $x'=\iota_\Gamma(x)$. For any other point $p$ of $\calC$, it can be verified, using Dhar's burning algorithm, that the $p$-reduced divisor equivalent to $p$ has a chip at $p$. In particular, the rank of $x+x'$ is $1$. \end{proof}

\noindent We define a map $\iota$ on $\calC$ as follows. If $p$ is a point of $\calC$ that does not lie on a rational component, then 
\[
 \iota(p) =
  \begin{cases} 
      \hfill \iota_\Gamma(p)    \hfill & \text{p is a graph point of $\Gamma$}. \\
      \hfill q \hfill & \hfill \text{$p$ is a point of $C_v$ with $g_v=1$, and $p+q$ is equivalent to a pair of nodes on $C_v$}. \\
      \hfill \iota_v(p) \hfill & \text{$p$ is a point of $C_v$ with $g_v\geq 2$}. \\
  \end{cases}
\]
If $p$ is on a rational component $C_v$, and $\iota_\Gamma(v)=v$, then define $\iota(p)=p$. Otherwise, $\iota(p)$ is any point of $C_{\iota(v)}$. As all the points on rational components are linearly equivalent, it does not matter, for purposes of divisor theory, which point we choose. 
For every $p\in\calC$, the rank of $p+\iota(p)$ is 1. Let $\calg^1_2$ be the divisor class of $p+\iota(p)$ for some $p$.

\subsubsection*{Reduced divisors} 
Recall that a divisor $\calD$ on a metrized complex is said to be $v$-\emph{reduced} with respect to a point $v$ of $\Gamma$ if it is effective away from $v$, and its chips are as "lexicographically close" as possible to $v$ (see \cite[Section 3.1]{AB} for a precise definition). For any $v$, reduced divisors exist and are quasi-unique, which means that their graph part is unique, and the restrictions to the components are unique up to linear equivalence \cite[Theorem 3.7]{AB}. 
We extend the definition to non-graphical points of a metrized complex. A divisor $\calD$ is $p$-\emph{reduced} for a point $p$ on a component $C_v$, if it is $v$-reduced,  and its restriction to $C_v$ has the highest degree at $p$ among the divisors that are effective away from $p$. Reduced divisors are quasi-unique, and their restriction to $C_v$ is unique.

%\cite{KY1}[Theorem 1.14]
In \cite{KY1}, it was shown that every divisor of degree $d$ and rank $r$ (for $0\leq r\leq d\leq g$) on a hyperelliptic metric graphs contains $r$ copies of the unique divisor of degree $2$ and rank $1$. A similar argument holds for metrized complexes.

\begin{theorem}\label{uniqueHyper}
Let $\calC$ be a hyperelliptic metrized complex. Then every divisor class $\delta$ of degree $d$ and rank $r$ with $0\leq r\leq d\leq g$ is equivalent to $r\cdot\calg^1_2 + p_{2r+1}+\ldots+p_d$. 
\end{theorem}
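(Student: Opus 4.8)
The key observation is that the asserted normal form is nothing but the statement that $\delta-r\cdot\calg^1_2$ is equivalent to an effective divisor: indeed $r\cdot\calg^1_2 + p_{2r+1}+\ldots+p_d$ is exactly $r\cdot\calg^1_2$ plus an arbitrary effective divisor of degree $d-2r\geq 0$. The plan is therefore to prove, by induction on the degree $d$, the equivalent assertion that for every effective class of degree $d\leq g$ and rank $r$ one has $\delta\simeq r\cdot\calg^1_2 + E$ with $E$ effective. The inductive engine is the extraction of a single copy of $\calg^1_2$: assuming $r\geq 1$, I would produce an effective $\delta_1$ of degree $d-2$ with $\delta\simeq\calg^1_2+\delta_1$ and $\mathrm{rank}(\delta_1)\geq r-1$, apply the inductive hypothesis to $\delta_1$ (whose degree is still at most $g$), and recombine: writing $\delta_1\simeq \mathrm{rank}(\delta_1)\cdot\calg^1_2+E_1$ and using $\mathrm{rank}(\delta_1)\geq r-1$ gives $\delta-r\cdot\calg^1_2\simeq\delta_1-(r-1)\calg^1_2\simeq E_1+(\mathrm{rank}(\delta_1)-r+1)\cdot\calg^1_2$, which is effective. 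The base cases $d\leq 1$ force $r=0$ and are trivial. It is worth stressing that the hypothesis $d\leq g$ is genuinely used: on a single elliptic curve a degree-$2$ class has rank $1$ but need not be equivalent to $\calg^1_2$, so the Clifford range is essential.

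The extraction step is the heart of the matter, and I expect it to be the main obstacle. Recall that by construction $x+\iota(x)\simeq\calg^1_2$ for every point $x$. Fix a point $x$ and its conjugate $x'=\iota(x)$, and let $\calD$ be the $x$-reduced representative of $\delta$; since $\mathrm{rank}(\delta)\geq 1$ it carries a chip at $x$, so $\calD\geq x$. The task is to show that $\calD-x-x'$ is again equivalent to an effective divisor, for then $\delta-\calg^1_2\simeq\calD-x-x'$ is effective. On the underlying graph, where $\Gamma/\iota_\Gamma$ is a metric tree by Lemma \ref{structureHE}, I would run Dhar's burning algorithm from $x'$ on $\calD-x$ and use the symmetry under $\iota_\Gamma$ (which interchanges the $x$-reduced and $x'$-reduced pictures) together with the fact that reduced divisors on a tree are completely controlled, in order to move a chip onto $x'$. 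On a component $C_v$ of positive genus, which is a hyperelliptic curve with involution $\iota_v$ and all of whose node pairs $p+p'$ are equivalent to $\calg^1_2$, the corresponding statement is the classical fact that a special divisor in the Clifford range is a multiple of the $\calg^1_2$ plus base points. The real work is to glue these local extractions along the nodes, which is exactly where the compatibility built into Lemma \ref{structureHE}, namely that every pair $p+\iota(p)$ (including pairs of nodes) is equivalent to $\calg^1_2$, is indispensable. This is the metrized-complex analogue of the argument of \cite{KY1} for hyperelliptic graphs.

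For the rank estimate $\mathrm{rank}(\delta_1)\geq r-1$ I would first record two facts. The first is that the canonical class of a hyperelliptic metrized complex satisfies $\calK\simeq(g-1)\cdot\calg^1_2$; this follows from the structure description in Lemma \ref{structureHE} (equivalently, from Riemann--Hurwitz for the degree-$2$ map to the genus-$0$ quotient). The second is the elementary inequality $\mathrm{rank}(M+\calg^1_2)\geq\mathrm{rank}(M)+1$, valid for any $M$ of nonnegative rank: given a test divisor of degree $\mathrm{rank}(M)+1$, split off one point, absorb the rest using $\mathrm{rank}(M)$, and use that the remainder still contains $\calg^1_2$ and hence has rank $\geq 1$. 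Now apply Riemann--Roch to $\delta_1=\delta-\calg^1_2$. Since $d\leq g$ and $r\geq 1$, Riemann--Roch for $\delta$ gives $\mathrm{rank}(\calK-\delta)=r-d+g-1\geq 0$, so the second fact applies to $M=\calK-\delta$ and yields $\mathrm{rank}(\calK-\delta+\calg^1_2)\geq r-d+g$. Feeding this into Riemann--Roch for $\delta_1$, together with $\calK-\delta_1=\calK-\delta+\calg^1_2$, gives $\mathrm{rank}(\delta_1)=\mathrm{rank}(\calK-\delta+\calg^1_2)+(d-2)+1-g\geq r-1$, as required.

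In summary, everything reduces to the extraction step of the second paragraph; the induction and the rank accounting are formal once that step and the identity $\calK\simeq(g-1)\cdot\calg^1_2$ are in hand. The main difficulty is combinatorial: controlling reduced divisors and Dhar's algorithm on the graph while simultaneously respecting the hyperelliptic structure of the positive-genus components and the equivalences among node pairs.
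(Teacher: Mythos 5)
Your reduction of the statement to a single-copy extraction step, and the rank bookkeeping around it, are sound: the claim is indeed equivalent to ``$\delta-r\cdot\calg^1_2$ is effective,'' the inequality $\mathrm{rank}(M+\calg^1_2)\geq\mathrm{rank}(M)+1$ is standard, and your Riemann--Roch computation giving $\mathrm{rank}(\delta-\calg^1_2)\geq r-1$ is correct (note it never actually uses the identity $\calK\simeq(g-1)\calg^1_2$, only the inequality applied to $M=\calK-\delta$, whose rank is nonnegative precisely because $d\leq g$). Your induction on $d$ also differs in shape from the paper's argument, which fixes a point $p$ with $\iota(p)=p$ and shows directly that the $p$-reduced representative $\calD$ of $\delta$ satisfies $\calD(p)\geq 2r$, getting the bound $r\leq s$ by removing $s$ well-chosen points rather than by a lower bound on $\mathrm{rank}(\delta-\calg^1_2)$. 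Either packaging would work.

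The genuine gap is the extraction step itself, which you correctly identify as ``the heart of the matter'' but then only sketch. You need: if $\delta$ has rank $\geq 1$ and degree $d\leq g$, then $\delta-\calg^1_2$ is effective. Your proposed route --- run Dhar's algorithm from $x'$ on $\calD-x$, exploit the symmetry under $\iota$, prove a local statement on each positive-genus component, and ``glue these local extractions along the nodes'' --- is not an argument: $\calD-x$ generically has rank $0$, so nothing about reduced divisors on the quotient tree or about the classical Clifford statement on the components forces a chip onto $x'$ without a global mechanism, and the gluing you defer is exactly the content of the theorem. The paper's proof of this step is short but relies on a specific tool you do not invoke: since $d\leq g$, Lemma 3.11 of \cite{AB} produces a point $q\neq p$ with $\calD+q$ still $p$-reduced; if $\calD(p)=1$, then $\calD''=\calD+q-p$ is $p$-reduced with $\calD''(p)=0$, while it is linearly equivalent (via $2p\simeq q+\iota(q)$) to the $p$-reduced representative $\calD'$ of $\delta+p-\iota(q)$, which has $\calD'(p)\geq 1$; quasi-uniqueness of reduced divisors on metrized complexes then gives a contradiction. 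Without this (or an equivalent replacement), your induction has no engine, so the proposal as written does not constitute a proof.
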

\begin{proof}
Let $p$ be a point of $\calC$ with $\iota(p)=p$. Let $\calD$ be the $p$-reduced representative of $\delta$. It suffices to show that $\calD(p)\geq 2r$.
First, assume that $r=1$. In this case, $\calD(p)\geq 1$. Assume for the sake of contradiction that it equals $1$. Since $d\leq g$, Lemma 3.11 of \cite{AB} implies that there is a point $q\neq p$ of $\calC$ such that  $\calD+q$ is still $p$-reduced. 
Let $\calD'$ be the $p$-reduced representative of  $\delta+p-\iota(q)$, and set $\calD''=\calD+q-p$. Then $\calD''\simeq\calD'$, and both are $p$-reduced, because $p$-reducedness does not change when adding or subtracting chips at $p$. By  quasi-uniqueness, both divisors have the same number of chips at $p$. But  $\calD''(p)=0$ and $\calD'(p)$ is at least 1, so we arrive at a contradiction.

For $r>1$, let $s$ be the largest integer so that $\calD = 2s\cdot p + q_1+\ldots+q_k$ (where one of the points $q_i$ might coincide with $p$). We need to show that $s\geq r$. Again, by  \cite[Lemma 3.11]{AB}, there are points $q_{k+1},\ldots,q_{k+s}$ different from $p$, such that $\calD+q_{k+1}+\ldots+q_{k+s}$ is $p$-reduced. Since $2p$ is equivalent to $q+\iota(q)$ for every $q$, we have
 $\calD\simeq q_1+\ldots+q_k+q_{k+1}+\iota(q_{k+1})+\ldots + q_{k+s} + \iota(q_{k+s})$. 
Therefore, the $p$-reduced representative of $\calD - \iota(q_{k+1}) -\ldots - \iota(q_{k+s})$ is exactly $q_1+\ldots+q_k$.
Its degree at $p$ is at most  $1$, so by the first part, its rank is 0. This divisor was obtained from $\calD$ by removing $s$ chips, so the rank of $\calD$ is at most $s$.

\end{proof}

\appendix
\begin{section}{Martens' theorem (joint with David Jensen)}
In this section, we discuss  possibilities for a tropical version of Martens' theorem, which refines the characterization of hyperelliptic curves provided by Clifford. It is one of several structure theorems for classifying special curves according to their Brill--Noether loci. Further refinement is given by Mumford \cite[Theorem 5.2]{ACGH}, and on the other extreme, the Brill--Noether theorem determines the dimension of the Brill--Noether locus of general curves \cite{Kempf, KL, GH}.
Let us first recall the classical statement of Martens' theorem.

\begin{theorem*}\cite[Theorem 5.1]{ACGH}
Let $C$ be a smooth curve of genus $g$, and let $d,r$ be integers satisfying $0<2r\leq d<g$.
Then $\dim(W^r_d(C))\leq d-2r$, and equality holds precisely when $C$ is hyperelliptic.  
\end{theorem*}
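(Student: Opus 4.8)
The plan is to reduce the statement to a bound on the dimension of an arbitrary irreducible component $X$ of $W^r_d(C)$, and to control that dimension through the tangent space to the Brill--Noether locus. If $\dim X=0$ the inequality is immediate since $d-2r\geq 0$, so I would assume $\dim X\geq 1$ and choose a general point $L\in X$, which may be taken in $W^r_d\setminus W^{r+1}_d$. At such a point the Zariski tangent space to $W^r_d$ inside $\mathrm{Pic}^d(C)$ is the annihilator, under Serre duality $T_L\mathrm{Pic}^d\cong H^0(K)^*$, of the image of the cup-product map $\mu_0\colon H^0(L)\otimes H^0(K-L)\to H^0(K)$. Consequently $\dim X\leq \dim T_L W^r_d=g-\mathrm{rank}(\mu_0)$, and Riemann--Roch records $h^0(L)=r+1$ and $h^0(K-L)=g-d+r$.

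For the inequality it then suffices to show $\mathrm{rank}(\mu_0)\geq h^0(L)+h^0(K-L)-1=g-d+2r$, since this yields $\dim X\leq g-(g-d+2r)=d-2r$. I would prove this lower bound by an elementary two-subspace argument: choosing nonzero sections $s\in H^0(L)$ and $t\in H^0(K-L)$ whose divisors have disjoint support, the image of $\mu_0$ contains $s\cdot H^0(K-L)+t\cdot H^0(L)$. Since multiplication by a nonzero section is injective on the integral curve $C$, these two subspaces have dimensions $h^0(K-L)$ and $h^0(L)$, and disjointness of the divisors of $s$ and $t$ forces their intersection to be exactly the line spanned by $st$. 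Hence $\mathrm{rank}(\mu_0)\geq h^0(L)+h^0(K-L)-1$, the only technical point being to move $s,t$ off the finitely many base points, which is handled by first stripping the base locus of $|L|$.

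The delicate half, and the step I expect to be the main obstacle, is the characterization of equality. The converse is the easier direction: if $C$ is hyperelliptic with pencil $A=\calg^1_2$, then $D\mapsto rA+D$ embeds $\mathrm{Sym}^{d-2r}(C)$ into $W^r_d(C)$ --- the smooth-curve shadow of Theorem \ref{uniqueHyper} --- and a direct count of $h^0(rA+D)$, using $K\simeq (g-1)A$, shows the rank is exactly $r$ on a dense open set; thus $\dim W^r_d\geq d-2r$ and equality holds. For the forward direction, equality $\dim X=d-2r$ forces every inequality above to be sharp at a general $L$, so $\mathrm{rank}(\mu_0)$ is minimal, meaning the cup-product map is as degenerate as possible along the whole family $X$. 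The genuine task is to convert this extremal behaviour of $\mu_0$, valid in a $(d-2r)$-dimensional family, into the existence of a degree-two pencil. I would organize this by induction on $r$, peeling off base points and general points to reduce to the base case $r=1$, where one must show that a $(d-2)$-dimensional family of degree-$d$ pencils with $d<g$ can occur only when $C$ carries a $\calg^1_2$. Extracting the hyperelliptic involution from the maximal degeneration of the multiplication map --- equivalently, from the geometry of the canonical model --- is the content that does not follow formally from Riemann--Roch, and is where the real work lies.
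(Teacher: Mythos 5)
The paper does not prove this statement at all---it is quoted directly from \cite[Theorem 5.1]{ACGH} as classical background for the appendix---so there is no internal proof to compare against, and I can only assess your argument on its own terms. The inequality half of your proposal is essentially sound: the identification of $T_L W^r_d$ with the annihilator of $\mathrm{Im}(\mu_0)$ at a point $L\in W^r_d\setminus W^{r+1}_d$, combined with the lower bound $\mathrm{rank}(\mu_0)\geq h^0(L)+h^0(K-L)-1$, gives $\dim X\leq g-(g-d+2r)=d-2r$. Two small repairs are needed. First, a component $X$ may lie entirely inside $W^{r+1}_d$, where a general point of $X$ cannot be taken outside $W^{r+1}_d$ and the tangent-space description fails; this case is dispatched by Clifford's theorem when $d\leq 2r+1$ and by induction on $r$ otherwise. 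Second, your two-subspace argument needs a pair $s,t$ with disjoint zero divisors, which is not automatic if $|L|$ and $|K-L|$ share base points; stripping the base locus of $|L|$ changes the multiplication map, and you would need to check that the image is unaffected, or else invoke the Hopf-type lower bound for bilinear maps without zero divisors directly. The converse implication (hyperelliptic implies equality) is correct as sketched.

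The genuine gap is the forward implication of the equality statement, which is the substantive half of Martens' theorem, and your proposal explicitly stops short of proving it: you reduce it to showing that maximal degeneracy of $\mu_0$ along a $(d-2r)$-dimensional family forces the existence of a $\calg^1_2$, and then declare this to be ``where the real work lies.'' No mechanism is offered for extracting the involution, and it is not clear that the infinitesimal data of $\mu_0$ at a general point of $X$ suffices for this; the scheme $W^r_d$ need not be reduced or smooth at such points, so even the passage from $\dim X=d-2r$ to ``$\mu_0$ has minimal rank at a general $L\in X$'' requires justification. The classical argument avoids $\mu_0$ entirely for this half: it compares $\dim W^r_d$ with $\dim W^{r-1}_{d-1}$ via subtraction of general points, shows that the resulting chain of inequalities can only be everywhere sharp if certain translates of $X$ inside the Jacobian coincide, and converts that coincidence into a $\calg^1_2$ using the geometry of the canonical model of a non-hyperelliptic curve (the general position theorem). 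As written, your proposal establishes the inequality and one direction of the equality criterion, but not the characterization of hyperelliptic curves, which is precisely the part of the statement that the paper's tropical discussion of Martens' theorem is modeled on.
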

\noindent Note that the special case where $d=2r$ is  Clifford's theorem. 
As a first attempt, we examine a naive tropical analog of the theorem. As the following result shows, the first part of the statement holds, but the second, unfortunately, does not. 
\begin{theorem}%[joint with Dave Jensen]
Let $\Gamma$ be a metric graph of genus $g$, and let $d,r$ be integers satisfying $0<2r\leq d<g$.
The dimension of $W^r_d(\Gamma)$ is at most $d-2r$, and equality holds for hyperelliptic graphs. However, the dimension of  $W^r_d(\Gamma)$ may be $d-2r$ for non-hyperelliptic curves.
\end{theorem}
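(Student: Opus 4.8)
The statement has three parts, and I would handle them in sequence, reusing the machinery already developed in the paper.

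\medskip

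\noindent\textbf{First part: the bound $\dim W^r_d(\Gamma)\le d-2r$.} The plan is to reduce to the metrized complex setting and exploit Riemann--Roch directly. A metric graph is a metrized complex all of whose components are rational, so Proposition~\ref{canonicalRep} (or \ref{canonicalRepLow}) and the correspondence machinery of Proposition~\ref{intersection} apply verbatim. I would argue that a $d$-dimensional family of divisors of rank $r$ forces, via the $\calP/\calQ$ correspondence, enough equivalences to conclude. More concretely, I expect to proceed by induction on $d$: given $\delta\in W^r_d(\Gamma)$, subtract a general point $p$ to obtain a class in $W^{r}_{d-1}$ or $W^{r-1}_{d-1}$ depending on whether $p$ lies in the base locus, and track how the dimension of the Brill--Noether locus drops. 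The base case $d=2r$ is exactly Clifford's theorem for graphs (Theorem~\ref{mainTheorem} applied to the all-rational complex), which pins the dimension to $0$, and each subsequent increment of $d$ by $1$ can raise the dimension by at most $1$.

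\medskip

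\noindent\textbf{Second part: equality for hyperelliptic graphs.} Here I would invoke the structure theorem, Theorem~\ref{uniqueHyper}, specialized to metric graphs (equivalently the result of \cite{KY1} cited in the text). Every class in $W^r_d(\Gamma)$ is of the form $r\cdot\calg^1_2 + p_{2r+1}+\ldots+p_d$. The $r$ copies of $\calg^1_2$ are rigid in the divisor class (the hyperelliptic pencil is unique), so all the moduli come from the $d-2r$ free points $p_{2r+1},\ldots,p_d$, which may roam over the graph. This produces a family of dimension exactly $d-2r$, establishing equality. The only care needed is to check that distinct choices of the free points give genuinely inequivalent classes generically, which follows because adding a free chip to a divisor of rank $r$ and degree $d<g$ strictly increases the class in the Jacobian for a general point.

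\medskip

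\noindent\textbf{Third part (the main point): equality can fail to characterize hyperellipticity.} This is where I expect the real work, since it requires exhibiting an explicit non-hyperelliptic metric graph $\Gamma$ of some genus $g$ with $\dim W^r_d(\Gamma)=d-2r$ for suitable $d,r$. The natural strategy is to build $\Gamma$ so that it carries a divisor class of degree $2r$ and rank $r$ \emph{as a metric graph} without being globally hyperelliptic---for instance by taking a chain or theta-like graph with a hyperelliptic subgraph attached along a bridge, so that a pencil lives on part of the graph but does not extend to a degree-$2$ rank-$1$ divisor on all of $\Gamma$. One then computes $W^r_d$ directly, using Dhar's burning algorithm and reduced divisors to verify the rank condition and count the moduli of the free chips. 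The hardest part will be the lower bound $\dim W^r_d(\Gamma)\ge d-2r$ together with the explicit verification that $\Gamma$ is not hyperelliptic: one must produce a $(d-2r)$-dimensional family of rank-$r$ divisors while simultaneously certifying, via Lemma~\ref{structureHE} or a direct rank computation, that $\Gamma$ admits no divisor of degree $2$ and rank $1$. I would look for the smallest such example (likely at genus $3$ or $4$ with $r=1$), since a minimal genus graph makes both the dimension count and the non-hyperellipticity transparent.
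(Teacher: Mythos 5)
Your plan contains two genuine gaps. The first is in the dimension bound. Your induction on $d$ rests on the claim that ``each subsequent increment of $d$ by $1$ can raise the dimension by at most $1$,'' i.e.\ essentially that $W^r_d(\Gamma)\subseteq \Gamma + W^r_{d-1}(\Gamma)$. This fails: a class $\delta\in W^r_d$ with empty base locus satisfies $\delta-p\in W^{r-1}_{d-1}$ for \emph{every} $p$, never $\delta - p\in W^r_{d-1}$, so the only containment you actually get from subtracting a single (general) point is $W^r_d\subseteq p+W^{r-1}_{d-1}$, which yields the bound $d-2r+1$ --- off by one. The paper closes this gap differently: it inducts on $r$, writes $W^r_d(\Gamma)=\bigcap_{p\in\Gamma}\bigl(p+W^{r-1}_{d-1}(\Gamma)\bigr)$, and then argues that the intersection over \emph{all} $p$ must drop the dimension by one more, because $W_{d-1}(\Gamma)$ (being a Minkowski sum of $d-1$ copies of the Abel--Jacobi image, whose $g$-fold sum is the whole Jacobian) cannot be contained in a subtorus, so near any $\delta$ some translate $p+W^{r-1}_{d-1}$ meets $W^{r-1}_{d-1}$ in strictly smaller dimension. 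Without some version of this ``intersect over all $p$'' step, your induction does not close; the base case via Clifford's theorem is fine but irrelevant to the problem. (Your appeal to the $\calP/\calQ$ correspondence of Proposition \ref{intersection} for this part is not developed and plays no role in the bound.)

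The second gap is the counterexample. You correctly identify this as the crux and sketch a reasonable construction strategy (a graph carrying a one-dimensional family of $g^1_3$'s without a $g^1_2$), but you do not actually produce a graph, verify non-hyperellipticity, or establish the lower bound $\dim W^1_3\geq 1$; as written this part of the statement remains unproved. The paper settles it by citing the explicit genus-$4$ graph of Theorem 1.2 in \cite{LPP}, which is not hyperelliptic yet has $\dim W^1_3(\Gamma)=1=3-2$. Your treatment of the hyperelliptic equality case, by contrast, matches the paper's: both use Theorem \ref{uniqueHyper} to write every class as $r\cdot\calg^1_2+p_{2r+1}+\cdots+p_d$ and let the $d-2r$ free points vary, and your added remark about generic injectivity of the resulting family is a harmless (indeed welcome) supplement to the paper's argument.
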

\begin{proof}
We prove the first part by induction on  $r$. Let $r=1$. We need to show that $\dim(W^1_d(\Gamma))< d-1$. Since $r=1$,  the Brill--Noether locus is the set of divisor classes with an effective representative through every point of $\Gamma$. That is, 
$$
W^1_d(\Gamma) = \underset{p\in\Gamma}{\cap}{p + W_{d-1}(\Gamma)}.
$$
\noindent  As we mention in the proof of Lemma \ref{rigidDivisor}, the space $W_{d-1}(\Gamma)$ is a the Minkowski sum of $d-1$ copies of the Abel--Jacobi image of the $\Gamma$, and is therefore
a connected polyhedral complex of dimension $d-1$. Moreover, since the sum of $g$ copies of the Abel--Jacobi image is the entire Jacobian, $W_{d-1}(\Gamma)$ cannot be contained in a subtorus. Therefore, for every point $\delta$ of $W^1_d(\Gamma)$, we can find a translation $p+W_{d-1}(\Gamma)$ of $W_{d-1}(\Gamma)$ that intersects $W_{d-1}(\Gamma)$ in a dimension strictly smaller than $d-1$ in a neighborhood of $\delta$. 

Next, assume that the claim holds for $r-1$.
We have 
$$
W^r_d(\Gamma) =  \underset{p\in\Gamma}{\cap}{p + W^{r-1}_{d-1}(\Gamma)}.
$$
By induction, all the cells of $W^{r-1}_{d-1}(\Gamma)$ are of dimension at most $d-2r+1$. Again, find $p$ such that the the intersection of $W^{r-1}_{d-1}(\Gamma)$ and  $p+W^{r-1}_{d-1}(\Gamma)$  around $\delta$ is not full dimensional. 

Now, suppose that $\Gamma$ is hyperelliptic, and let $\delta$ be an element of $W^r_d(\Gamma)$. By Proposition \ref{uniqueHyper}, $\delta$ has a representative of the form $w_1+\ldots + w_{d-2r} + 2r\cdot v_0$, where $v_0$ is a point of $\Gamma$ which is invariant under the hyperelliptic involution. Since $2r\cdot v_0$ has rank $r$, any perturbation of the points $w_1,\ldots,w_{d-2r}$ results in a divisor of rank at least $r$. It follows that $\delta$ has a neighborhood of dimension $d-2r$ in $W^r_d(\Gamma)$.

To show that the converse is false, let $\Gamma$ be the graph from Theorem 1.2 in \cite{LPP}. It is straightforward to check that $\Gamma$ is not hyperelliptic. However, the dimension of $W^1_3(\Gamma)$ is $1=3-2\cdot 1$.
\end{proof}

The theorem above is not the first example in which the dimension of the Brill--Noether locus exhibits unpleasing behavior. For instance, it does not vary upper semicontinuously on the moduli space of tropical curves  \cite[Theorem 1.2]{LPP}. These phenomena suggest that a different quantity should act as the tropical analog for the dimension of the Brill--Noether locus. Such an invariant was introduced in \cite{LPP}.
\begin{definition}
The Brill--Noether rank, denoted $w^r_d$ is the largest number $\rho$ such that every effective divisor of degree $r+\rho$ is contained in a divisor of degree $d$ and rank $r$.
\end{definition}

For an algebraic curve, the Brill--Noether rank coincides with the dimension of the largest component of its Brill--Noether locus. Consequently, it satisfies a specialization lemma: if $\Gamma$ is the skeleton of an algebraic curve $C$, then $w^r_d(\Gamma)\geq \dim(W^r_d(C))$ . Furthermore, it varies upper semicontinuously on the moduli space of tropical curves (Theorems 5.1, 5.3 of \cite{Len1}, and Theorems 1.6, 1.7 of \cite{LPP}).   
As we show here, it also satisfies the first part of Martens' theorem.

\begin{proposition}
Let $\Gamma$ be a metric graph of genus $g$, and let $r,d$ be as in the conditions of Martens' theorem. Then 
$$
w^r_d(\Gamma)\leq d-2r.
$$
Moreover, for hyperelliptic graphs, $w^r_{d}(\Gamma)=d-2r$.
\end{proposition}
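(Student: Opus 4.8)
The plan is to prove the two inequalities separately: the bound $w^r_d(\Gamma)\le d-2r$ by contradiction using the basic rank inequality, and the reverse inequality in the hyperelliptic case by a direct construction with the involution.

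For the upper bound I would suppose, toward a contradiction, that $w^r_d(\Gamma)\ge d-2r+1$. Unwinding the definition of the Brill--Noether rank, this means every effective divisor $E$ of degree $r+(d-2r+1)=d-r+1$ is contained in some divisor $D$ of degree $d$ and rank $r$. Writing $D=E+F$ with $F$ effective of degree $r-1$, the standard inequality $\operatorname{rank}(D-F)\ge\operatorname{rank}(D)-\deg F$ forces $\operatorname{rank}(E)\ge r-(r-1)=1$. Thus the assumption would make \emph{every} effective divisor of degree $d-r+1$ have positive rank. To contradict this I need a single such divisor of rank $0$, and here I would invoke Lemma~\ref{rigidDivisor}: it provides a rigid divisor $P$ of degree $g-1$, which has rank $0$. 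Since $r\ge 1$ and $d\le g-1$ we have $d-r+1\le g-1$, so I may take $E$ to be any sub-divisor of $P$ of degree $d-r+1$; as adding the effective divisor $P-E$ cannot decrease the rank, $\operatorname{rank}(E)\le\operatorname{rank}(P)=0$. This contradiction yields $w^r_d(\Gamma)\le d-2r$.

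For the hyperelliptic lower bound, let $\iota$ be the involution and $\calg^1_2$ the class of $p+\iota(p)$. I must show that every effective divisor $E$ of degree $r+(d-2r)=d-r$ lies in a divisor of degree $d$ and rank at least $r$. Since $d\ge 2r$, the divisor $E$ has at least $r$ chips, so I can choose $p_1,\ldots,p_r\le E$ and set $D=E+\iota(p_1)+\ldots+\iota(p_r)$. Then $D\ge E$ and $\deg D=(d-r)+r=d$, while grouping the pairs $p_i+\iota(p_i)$ gives $D\sim r\cdot\calg^1_2+E'$, where $E'=E-(p_1+\ldots+p_r)$ is effective of degree $d-2r$. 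Because $r\cdot\calg^1_2$ has rank $r$, adjoining the effective divisor $E'$ keeps the rank at least $r$, so $D\in W^r_d(\Gamma)$ contains $E$. Hence $w^r_d(\Gamma)\ge d-2r$, and combined with the first part $w^r_d(\Gamma)=d-2r$.

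The only non-formal ingredient is producing the rank-zero effective divisor of degree $d-r+1$ in the upper-bound step; the rest is bookkeeping with the rank inequality and the relation $p+\iota(p)\sim\calg^1_2$. I expect the points to check carefully are that the degree $d-r+1$ indeed lies in the range $[1,g-1]$ where rank-zero divisors are available (which follows from $2r\le d<g$ and $r\ge1$), and that ``rank $r$'' in the definition of $w^r_d$ is read as rank at least $r$, so that the constructed witness $D$ in the second part is admissible even if its rank happens to exceed $r$.
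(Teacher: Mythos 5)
Your proof is correct and follows essentially the same route as the paper: the upper bound via the observation that adding $r-1$ points to a rank-zero divisor of degree $d-r+1$ cannot reach rank $r$, and the lower bound via the divisor $E+\iota(p_1)+\cdots+\iota(p_r)$. The only difference is that you explicitly justify the existence of a rank-zero effective divisor of degree $d-r+1$ by truncating the rigid divisor from Lemma \ref{rigidDivisor}, a detail the paper asserts without proof.
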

\begin{proof}
Suppose for contradiction that $w^r_d > d-2r$. Then every divisor of degree $d-r+1$ is  contained in a divisor of degree $d$ and rank $r$. But this is clearly false: choose a divisor of degree $d-r+1$ and rank 0. By adding $r-1$ points to any divisor, the rank can  increase by at most $r-1$. 

For the second part, assume that $\Gamma$ is hyperelliptic, and let $E$ be an effective divisor of degree $d-r$. We need to show that it is contained in a divisor of degree $d$ and rank at least  $r$.
Since $d\geq 2r$, the degree of $E$ is at least $r$. Let $p_1,\ldots, p_r$ be points in the support of $E$, and let $\iota$ be the hyperelliptic involution of $E$. Then $E+ \iota(p_1)+\ldots +\iota(p_r)$ has rank at least $r$. 
\end{proof}

Given the facts above, we speculate that Martens' theorem holds in tropical geometry. 
\begin{conjecture*}
Let $\Gamma$ be a metric graph of genus $g$, and let $d,r$ be such that $0<2r\leq d<g$.
Then $w^r_d(\Gamma)\leq d-2r$, and equality holds precisely exactly when $C$ is hyperelliptic.  
\end{conjecture*}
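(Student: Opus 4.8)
The plan is to dispose of the two routine parts using the proposition established just above and to concentrate on the only genuinely new content, namely the implication that $w^r_d(\Gamma)=d-2r$ forces $\Gamma$ to be hyperelliptic. The inequality $w^r_d(\Gamma)\leq d-2r$, and the equality $w^r_d(\Gamma)=d-2r$ for hyperelliptic graphs, are exactly what the preceding proposition proves, so nothing further is required there. For the converse I would argue by induction on the \emph{defect} $k=d-2r\geq 0$, the decisive feature being that the base case $k=0$ is precisely Clifford's theorem. Indeed, if $d=2r$ then $w^r_{2r}(\Gamma)=0$ means that every effective divisor of degree $r$ is contained in a divisor of degree $2r$ and rank $r$; choosing any such effective divisor produces a divisor of degree $2r$ of rank at least $r$, hence of rank exactly $r$ by the first part. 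Since $2r=d<g$ gives $0<r<g-1$, Theorem \ref{mainTheorem} applied to the metrized complex all of whose components are rational (that is, to $\Gamma$ itself) shows that $\Gamma$ is hyperelliptic.

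For the inductive step I would isolate the following reduction lemma: if $0<2r<d<g$ and $w^r_d(\Gamma)\geq d-2r$, then $w^r_{d-1}(\Gamma)\geq d-2r-1$. Combined with the first part this yields $w^r_{d-1}(\Gamma)=(d-1)-2r$, a case of strictly smaller defect still lying in the Martens range $0<2r\leq d-1<g$, so the induction hypothesis applies and $\Gamma$ is hyperelliptic. Unwinding the definitions, the reduction lemma asserts that if every effective divisor of degree $d-r$ extends to a divisor of degree $d$ and rank $r$, then every effective divisor of degree $d-r-1$ extends to a divisor of degree $d-1$ and rank $r$. The naive attempt is to take an effective divisor $E$ of degree $d-r-1$, adjoin a point $p$ to reach degree $d-r$, obtain $\calD\geq E+p$ of degree $d$ and rank $r$ from the hypothesis, and then delete a single chip of $\calD$ to return to degree $d-1$ while retaining the containment of $E$.

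The main obstacle is precisely this deletion: removing one chip from a divisor of rank $r$ drops the rank to $r-1$ unless that chip is a base point of the complete linear system, and a general divisor of degree $d<g$ and rank $r$ has no base points. Thus the entire difficulty, and the genuine geometric content inherited from the classical Martens theorem, is to show that the \emph{extremality} $w^r_d(\Gamma)=d-2r$ forces the extending divisors to acquire base points that can be removed without lowering the rank. Here I would exploit the exact equality through its witness: since $d-2r$ is the largest admissible value, there is an effective divisor $B$ of degree $d-r+1$ contained in no divisor of degree $d$ and rank $r$, and I would play $B$ against the extending divisors $\calD\geq E+p$ as $p$ specializes toward the support of $E$. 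Concretely, letting $p$ tend to a point of $E$ and passing to the limit inside the closed Brill--Noether locus should produce a rank $r$ divisor containing $E$ with a chip available for deletion, and reduced divisor theory on $\Gamma$, as used in the proof of Theorem \ref{uniqueHyper}, is the natural tool for making both the limit and the base-point analysis precise. Converting this specialization heuristic into a rigorous production of removable base points is the crux, and is the reason the statement is recorded here only as a conjecture.
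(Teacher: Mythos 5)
This statement is recorded in the paper as a conjecture, not a theorem: the paper proves only the inequality $w^r_d(\Gamma)\leq d-2r$ and the equality for hyperelliptic graphs (the proposition immediately preceding it), and leaves the converse open. Your proposal correctly identifies this division of labor, and your base case is sound: $w^r_{2r}(\Gamma)=0$ forces $W^r_{2r}(\Gamma)\neq\emptyset$, the rank is exactly $r$ by Riemann--Roch, the range $0<2r=d<g$ gives $0<r<g-1$, and Theorem \ref{mainTheorem} applied to $\Gamma$ viewed as a metrized complex with rational components yields hyperellipticity. So the induction on the defect $d-2r$ is a reasonable framing. But the entire content of the open problem is then concentrated in your ``reduction lemma'' ($w^r_d(\Gamma)\geq d-2r$ and $2r<d<g$ imply $w^r_{d-1}(\Gamma)\geq d-1-2r$), and for that you offer only a heuristic. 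The specialization argument does not close: letting $p$ tend to a point $p_0$ in the support of $E$ and passing to a limit in the closed locus of rank-$r$ classes produces a divisor $\calD\geq E+p_0$ of degree $d$ and rank $r$, i.e.\ a divisor containing $E$ with one extra chip on top of $E$ --- but nothing guarantees that this extra chip is a base point of $|\calD|$, and deleting a non-base chip drops the rank to $r-1$, which is exactly the failure mode you yourself flag. The extremal witness $B$ of degree $d-r+1$ that extends to no rank-$r$ divisor is announced but never actually enters the argument, so the extremality hypothesis is never used where it must be. Since the reduction lemma together with your base case would prove the full conjecture, you have in effect reduced the open problem to an equivalent open problem; the proposal is an honest strategy sketch, not a proof, and the statement remains a conjecture.
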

\end{section}

\bibliographystyle{plain}
\bibliography{Cliffbib}

\begin{thebibliography}{10}

\bibitem{AB}
O.~Amini and M.~Baker.
\newblock Linear series on metrized complexes of algebraic curves.
\newblock {\em Math. Ann.}, 362(1-2):55--106, 2015.

\bibitem{ABBR2}
O.~Amini, M.~Baker, E.~Brugall{{\'e}}, and J.~Rabinoff.
\newblock Lifting harmonic morphisms {II}: {T}ropical curves and metrized
  complexes.
\newblock {\em Algebra Number Theory}, 9(2):267--315, 2015.

\bibitem{AC}
O.~Amini and L.~Caporaso.
\newblock Riemann-{R}och theory for weighted graphs and tropical curves.
\newblock {\em Adv. Math.}, 240:1--23, 2013.

\bibitem{ACGH}
E.~Arbarello, M.~Cornalba, P.~A. Griffiths, and J.~Harris.
\newblock {\em Geometry of algebraic curves. {V}ol. {I}}, volume 267 of {\em
  Grundlehren der Mathematischen Wissenschaften [Fundamental Principles of
  Mathematical Sciences]}.
\newblock Springer-Verlag, New York, 1985.

\bibitem{Baker}
M.~Baker.
\newblock Specialization of linear systems from curves to graphs.
\newblock {\em Algebra Number Theory}, 2(6):613--653, 2008.

\bibitem{BJ}
M.~Baker and D.~Jensen.
\newblock Degeneration of linear series from the tropical point of view and
  applications.
\newblock {\em preprint arXiv 1504.05544.}, 2015.

\bibitem{BN}
M.~Baker and S.~Norine.
\newblock Riemann-{R}och and {A}bel-{J}acobi theory on a finite graph.
\newblock {\em Adv. Math.}, 215(2):766--788, 2007.

\bibitem{chan}
M.~Chan.
\newblock Tropical hyperelliptic curves.
\newblock {\em Journal of Algebraic Combinatorics}, 37(2):331--359, 2013.

\bibitem{Coppens}
M.~Coppens.
\newblock Clifford's theorem for graphs.
\newblock {\em preprint arXiv 1304.6101.}, 2013.

\bibitem{GK}
A.~Gathmann and M.~Kerber.
\newblock A {R}iemann-{R}och theorem in tropical geometry.
\newblock {\em Math. Z.}, 259(1):217--230, 2008.

\bibitem{GH}
P.~Griffiths and J.~Harris.
\newblock On the variety of special linear systems on a general algebraic
  curve.
\newblock {\em Duke Math. J.}, 47(1):233--272, 1980.

\bibitem{KRZ}
E.~Katz, J.~Rabinoff, and D.~Zureick-Brown.
\newblock Uniform bounds for the number of rational points on curves of small
  mordell--weil rank.
\newblock {\em Preprint 1504.00694}, 2015.

\bibitem{KY1}
S.~Kawaguchi and K.~Yamaki.
\newblock Rank of divisors on hyperelliptic curves and graphs under
  specialization.
\newblock {\em Int. Math. Res. Not.}, 2015(12):4121--4176, 2015.

\bibitem{Kempf}
G.~Kempf.
\newblock Schubert methods with an application to algebraic curves.
\newblock {\em Publ. Math. Centrum}, 1971.

\bibitem{KL}
L.~Kleiman and D.~Laksov.
\newblock On the existence of special divisors.
\newblock {\em American Journal of Mathematics}, 94(2):pp. 431--436, 1972.

\bibitem{Len1}
Y.~Len.
\newblock The {B}rill-{N}oether rank of a tropical curve.
\newblock {\em Journal of Algebraic Combinatorics}, 40(3):841--860, 2014.

\bibitem{LPP}
C.~M. Lim, S.~Payne, and N.~Potashnik.
\newblock A note on {B}rill-{N}oether theory and rank-determining sets for
  metric graphs.
\newblock {\em Int. Math. Res. Not.}, (23):5484--5504, 2012.

\bibitem{MZ}
G.~Mikhalkin and I.~Zharkov.
\newblock Tropical curves, their {J}acobians and theta functions.
\newblock In {\em Curves and abelian varieties}, volume 465 of {\em Contemp.
  Math.}, pages 203--230. Amer. Math. Soc., Providence, RI, 2008.

\end{thebibliography}

\end{document}